\newcommand{\xiaowuhao}{\fontsize{9pt}{\baselineskip}\selectfont}
\newcommand{\xiaoliuhao}{\fontsize{7pt}{\baselineskip}\selectfont}
\newtheorem{thm}{Theorem}[section]
\newtheorem{cor}[thm]{Corollary}
\newtheorem{lem}[thm]{Lemma}
\newtheorem{prop}[thm]{Proposition}
\theoremstyle{definition}
\newtheorem{exam}[thm]{Example}
\newtheorem{rem}[thm]{Remark}
\newcommand{\s}{\stackrel}
\newcommand{\C}{\mathcal C}
\newcommand{\D}{\mathcal D}
\newcommand{\K}{\mathcal K}
\renewcommand{\S}{\mathcal S}
\newcommand{\T}{\mathcal T}
\newcommand{\X}{\mathcal X}
\newcommand{\Y}{\mathcal Y}
\DeclareMathOperator*{\add}{\mathsf{add}}
\DeclareMathOperator{\cone}{\mathsf{cone}}
\DeclareMathOperator{\Filt}{\mathsf{Filt}}
\DeclareMathOperator{\Hom}{\mathsf{Hom}}
\DeclareMathOperator*{\Mod}{\mathsf{Mod}\!-}
\DeclareMathOperator*{\op}{\mathsf{op}}
 \DeclareMathOperator*{\projA}{\mathsf{proj}-\mathnormal{A}}
  \DeclareMathOperator*{\projB}{\mathsf{proj}-\mathnormal{B}}
   \DeclareMathOperator*{\projC}{\mathsf{proj}-\mathnormal{C}}
\DeclareMathOperator{\RHom}{\mathsf{RHom}}
 \DeclareMathOperator*{\smod}{\mathsf{mod}-}
 \DeclareMathOperator*{\smodA}{\mathsf{mod}-\mathnormal{A}}
 \DeclareMathOperator*{\smodB}{\mathsf{mod}-\mathnormal{B}}
 \DeclareMathOperator*{\smodC}{\mathsf{mod}-\mathnormal{C}}
 \DeclareMathOperator*{\smodeAe}{\mathsf{mod}-\mathnormal{eAe}}
 \DeclareMathOperator*{\smodAeAe}{\mathsf{mod}-\mathnormal{A/AeA}}
\title[Gluing simple-minded collections]{Gluing simple-minded collections in triangulated categories}
\author[Y. Sun & Y. Zhang]{Yongliang Sun and Yaohua Zhang*}
\address{\normalfont{Yongliang Sun \\School of Mathematics and Physics, Yancheng Institute of Technology, Jiangsu 224003, People's Republic of China}}
\email{syl13536@126.com}
\address{\normalfont{Yaohua Zhang \\ Hubei Key Laboratory of Applied Mathematics, Faculty of Mathematics and Statistics, Hubei University, Wuhan, 430062, China}}
\email{2160501008@cnu.edu.cn}
\thanks{* Corresponding author.}
\keywords{Simple-minded collection, $t$-structure, recollement, mutation}
\subjclass[2020]{Primary: 18G80; Secondary:16E35}
\begin{document}
\begin{abstract}
We provide a technique to glue simple-minded collections along a recollement of Hom-finite Krull-Schmidt triangulated categories over a field. This gluing technique for simple-minded collections is shown to be compatible with those for gluing bounded $t$-structures, silting objects, and co-$t$-structures in the literature. Furthermore, it also enjoys the properties of preserving partial order and commuting with the operation of mutation.
\end{abstract}
\maketitle
\setcounter{tocdepth}{1}
\tableofcontents

\section{Introduction}
A simple-minded collection in a triangulated category is a set of objects whose properties mimic the properties of the set of simple modules of a finite-dimensional algebra. It was introduced by AI-Nofayee firstly with a name of cohomologically Schurian set of generators \cite{AI2009}, and later by Keller-Nicol\'{a}s \cite{KN2013}, Koenig-Yang \cite{KY2010, KY2014}, Rickard-Rouquier \cite{RR2017} in different settings. These studies set up a strong relation between simple-minded collections and bounded $t$-structures. Indeed, in a Hom-finite Krull Schmidt triangulated category, there is a one-to-one correspondence between isomorphism classes of simple-minded collections and so-called algebraic $t$-structures, { i.e. bounded $t$-structures with the hearts length categories with finitely many isomorphism classes of simple objects} (see \cite[Subsection 5.3, Proposition 5.4]{KY2014} and \cite[Lemma 3.20 below]{AMY2019}). In the particular situation of the bounded derived category of a finite-dimensional algebra, these two kinds of objects correspond bijectively to bounded co-$t$-structures, silting objects \cite[Theorem 6.1]{KY2014}. Mutation of simple-minded collections was defined in \cite{KY2014}, it is compatible with the mutation of $t$-structures, co-$t$-structures and silting objects. In the context of Bridgeland stability conditions, mutations, or tilts, of simple-minded collections correspond to wall crossing \cite{KQ2015}.

Recollements of triangulated categories,  introduced by Beilinson, Bernstein and
Deligne in their fundamental work on perverse sheaves \cite{BBD1982}, play an important role in studying the representation theory of algebras \cite{HKLY2017, CX2017}. Whenever a recollement is given, one may, and often should, ask how particular objects or properties in the outer terms are related to similar kinds of objects or properties in the middle term. This process is usually named gluing or reduction (along a recollement) in literature and has been {investigated in such cases }as reduction of the finiteness of global or finitistic dimensions \cite{CX2017, Happ1993}, or Grothendieck groups or $K$-theory of finite-dimensional algebras \cite{HKLY2017, CX2013} and gluing of tilting objects \cite{HKL2011}, $t$-structures \cite{BBD1982}, co-$t$-structures \cite{Bond2010} and silting objects \cite{AI2012, LVY2014}. Gluing of simple-minded collections is obtained implicitly in \cite[Proposition 5.5]{CPP2022}, which is an explicit specialization of \cite[Theorem 3.1]{J2023}. In this paper, we follow the ideas of reduction in \cite{CPP2022} and gluing silting objects in \cite{LVY2014} to present an explicit construction of simple-minded collections along a recollement of triangulated categories. Our gluing (Theorem~\ref{thm:intro 1}) is related to Coelho {\color{black}Sim\~oes}-Pauksztello-Ploog's reduction \cite[Proposition 5.5]{CPP2022} for simple-minded collections in a way analogous to the way that Liu-Vit\'{o}ria-Yang's gluing \cite[Theorem 3.1]{LVY2014} is related to Aihara-Iyama's reduction \cite[Theorem 2.37]{AI2012} for silting objects. The provided gluing technique for simple-minded collections is also shown to be compatible with that of $t$-structures in the setting of Hom-finite Krull-Schmidt triangulated categories, with those of co-$t$-structures and silting objects in the setting of bounded derived categories of finite dimensional algebras. Furthermore, we show it preserves partial order and commutes with the operation of mutation of simple-minded collections.

Now, let's state the main results of this paper.
Let $\T$ be a Hom-finite Krull-Schmidt triangulated category and a recollement (see Subsection~\ref{subsec:recollement}) of triangulated categories $\X$ and $\Y$. Denote the two adjoint triples by $(i^*, i_*, i^!)$ and $(j_!, j^!, j_*)$. Suppose that $\X$ and $\Y$ admit simple-minded collections (see Subsection~\ref{subsec:smc}) $S_\X=\{X_1, \cdots, X_m\}$ and $S_\Y=\{Y_1, \cdots, Y_n\}$, respectively. There is an example {\color{black}showing} that neither of the generating sets $\{i_*(X_1),\cdots, i_*(X_m),  j_*(Y_1), \cdots,  j_*(Y_n)\}$ and $\{i_*(X_1),\cdots, i_*(X_m),  j_!(Y_1), \cdots,  j_!(Y_n)\}$ are simple-minded collections (Example~\ref{exam:non smc}). Our first main result is to present so-called gluing techniques to refine these sets to be simple-minded collections of $\T$,  the two processes are dual and the refined simple-minded collections are shown to be isomorphic by comparing their associated $t$-structures.
The following theorem is due to \cite{CPP2022, J2023}, here we restate it in the language of recollements.

\begin{thm}{\textnormal{({\color{black}\cite[Proposition 5.5]{CPP2022}, \cite[Theorem 2.37]{J2023}}, Theorem~\ref{thm:main}, Proposition~\ref{prop:comp $t$-structure})}}\label{thm:intro 1} With the notation as above. The set
$$S_\T=\{i_{*}(X_{1}),i_{*}(X_{2}),\cdots, i_{*}(X_{m}),W_{1},W_{2},\cdots,W_{n}\}$$
is a simple-minded collection of $\T$, where $W_j$ lies in the triangle (see Section~\ref{sec:gluing} for the detailed construction)
        $$i_*\tau^{\leq 0}i^!j_!(Y_j)\longrightarrow j_!(Y_j)\longrightarrow W_j\longrightarrow i_*\tau^{\leq 0}i^!j_!(Y_j)[1],$$
where $\tau^{\leq 0}$ is the truncation at 0 of the associated $t$-structure $(\Filt S_\X[\geq 0], \Filt S_\X[\leq 0])$ in $\X$. Moreover, the gluing technique presented is compatible with that of $t$-structures.
\end{thm}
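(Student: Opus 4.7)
The plan is to verify the three simple-minded collection axioms for $S_\T$ and then derive the compatibility with the BBD-glued $t$-structure. My technical workhorse throughout will be the set of identities obtained by applying the six recollement functors to the defining triangle of $W_j$. Using the standard recollement identities $i^* i_* \cong \id \cong i^! i_*$, $j^! j_! \cong \id$, $i^* j_! = 0$, $j^! i_* = 0$, one immediately obtains
\begin{equation*}
i^* W_j \cong \bigl(\tau^{\leq 0} i^! j_!(Y_j)\bigr)[1], \qquad i^! W_j \cong \tau^{\geq 1} i^! j_!(Y_j), \qquad j^! W_j \cong Y_j.
\end{equation*}
I will read off from these that $W_j$ lies in the heart $\H_{\mathrm{gl}}$ of the BBD-glued $t$-structure: the $i^*$-image lies in $\X^{\leq -1} \subseteq \X^{\leq 0}$, the $i^!$-image lies in $\X^{\geq 1} \subseteq \X^{\geq 0}$, and the $j^!$-image equals $Y_j$, an object of the heart of $\Y$. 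Each $i_*(X_i)$ clearly also lies in $\H_{\mathrm{gl}}$.

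Axioms (SMC1) (Hom-vanishing in negative shifts) and (SMC2) (orthogonality and division endomorphism rings) will then reduce to a case analysis of pairs in $S_\T$. The pair $(i_* X_i, i_* X_{i'})$ will follow from the fully faithfulness of $i_*$. For mixed pairs I will compute $\Hom_\T(i_* X_i, W_j[k]) \cong \Hom_\X(X_i, \tau^{\geq 1} i^! j_!(Y_j)[k])$, which vanishes for $k \leq 0$ since $X_i$ is in the heart of $\X$ and the target lies in $\X^{\geq 1-k}$; a dual computation through $i^*$ handles $\Hom_\T(W_j[k], i_* X_i)$. For a pair $(W_i, W_j)$, I will apply $\Hom_\T(-, W_j[k])$ to the defining triangle of $W_i$: the two outer Hom-spaces vanish by the mixed-pair argument above, and the middle term reduces via the $(j_!, j^!)$-adjunction to $\Hom_\Y(Y_i, Y_j[k])$, so both the required vanishing and the division-ring structure of $\End_\T(W_i)$ descend from the corresponding properties of $S_\Y$.

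Axiom (SMC3) will follow from the localization triangle $j_! j^!(M) \to M \to i_* i^*(M) \to$ available for any $M \in \T$: this exhibits $\T$ as the thick subcategory generated by the essential images of $i_*$ and $j_!$; since the defining triangle of $W_j$ places $j_!(Y_j)$ inside the thick closure of $S_\T$, and each $i_*(X_i) \in S_\T$ directly, that closure already contains both generating families and hence equals $\T$. Finally, for the compatibility with $t$-structure gluing, the inclusion $S_\T \subseteq \H_{\mathrm{gl}}$ from the first paragraph gives $\Filt(S_\T) \subseteq \H_{\mathrm{gl}}$, so the aisle of the $S_\T$-$t$-structure is contained in that of the glued $t$-structure. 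The main obstacle will be the reverse inclusion: I expect to argue that every object of $\H_{\mathrm{gl}}$ admits a filtration whose factors lie in $\{i_*(X_i)\} \cup \{W_j\}$, constructed by first filtering $j^!(M) \in \H_\Y$ by the $Y_i$ (lifted along $j_!$ and corrected into $W_i$'s via their defining triangles) and then filtering the residual $i_*(\X)$-component by the $X_i$. Careful bookkeeping of the truncation conventions and of the recollement-of-hearts structure $\H_\X \hookrightarrow \H_{\mathrm{gl}} \twoheadrightarrow \H_\Y$ will be essential, but no idea beyond the above recollement identities should be needed.
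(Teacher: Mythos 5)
Your proof is correct and follows essentially the same outline as the paper's Section~\ref{sec:gluing}: the three recollement identities you extract from the defining triangle of $W_j$ are Lemma~\ref{lem:image of W}, your case analysis for the Hom-vanishing recovers Lemmas~\ref{lem:prep1} and~\ref{lem:prep2} (you reach the same conclusion by passing through the adjunctions $\Hom(i_*X_i, W_j[k]) \cong \Hom(X_i, i^!W_j[k])$ and $\Hom(W_j, i_*X_i[k]) \cong \Hom(i^*W_j, X_i[k])$ rather than by applying $\Hom$ to the triangle $(\#\#)$, but it is the same computation), and the generation argument and the inclusion $\Filt S_\T[\geq 0] \subseteq \T^{\leq 0}$ match Theorem~\ref{thm:main} and Lemma~\ref{lem:prep4.1}. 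The one place to tighten is the reverse inclusion for compatibility: rather than constructing a filtration of a heart object $M$ by lifting a filtration of $j^!M$ through the Serre quotient $\H_{\mathrm{gl}} \to \H_{\Y}$ --- which is genuinely delicate, since $j_!$ does not send hearts to hearts --- work at the aisle level as in Proposition~\ref{prop:comp $t$-structure}. For $T \in \T^{\leq 0}$ the localization triangle $j_!j^!T \to T \to i_*i^*T \to$ exhibits $T$ as an extension of two objects of $\Filt S_\T[\geq 0]$: the right-hand term since $i^*T \in \Filt S_\X[\geq 0]$ and each $i_*(X_i) \in S_\T$, and the left-hand term because $j_!(\Filt S_\Y[\geq 0]) \subseteq \Filt S_\T[\geq 0]$, which falls out of the triangle $i_*U_j \to j_!Y_j \to W_j \to$ together with the fact that the aisle $\Filt S_\T[\geq 0]$ (unlike $\Filt S_\T$ itself) is closed under extensions and positive shifts. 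This single observation replaces the heart-level bookkeeping you were anticipating.
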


The above gluing technique for simple-minded collections enjoys good properties. Firstly, it preserves partial order (see Section~\ref{sec:pre}) of simple-minded collections, that is if $S_\X\geq S'_\X$ or $S_\Y\geq S'_\Y$, then the glued simple-minded collections $\langle S_\X, S_\Y\rangle \geq \langle S'_\X, S_\Y\rangle, \langle S_\X, S_\Y'\rangle\geq \langle S'_\X, S'_\Y\rangle$ (Theorem~\ref{thm:preserve order}), where the notation $\langle S, S'\rangle$ denotes the glued simple-minded collection of $S$ and $S'$. Furthermore, it commutes with the operation of mutation. Denote the left (resp., right) mutation (see Section~\ref{sec:pre}) in $i$ by $\mu^+_i$ (resp., $\mu^-_i$).

\begin{thm}[Theorem~\ref{thm:mutation smc}]\label{thm:intro 2}
With the notation in Theorem~\ref{thm:intro 1} and as above.
  \begin{enumerate}
  \item Suppose that $X_i$ is rigid. Then
   $$\langle \mu^+_i(S_\X), S_\Y\rangle=\mu_i^+\langle S_\X,  S_\Y\rangle\quad \text{and}\quad \langle \mu^-_i(S_\X), S_\Y\rangle=\mu_i^-\langle S_\X,  S_\Y\rangle.$$
  \item Suppose that $Y_j$ is rigid.
   \begin{enumerate}
     \item If $\Hom(i_*(X_t), W_j[1])=0, 1\leq t\leq m$,  then $$\langle S_\X, \mu^+_j(S_\Y)\rangle=\mu^+_{m+j}(\langle S_\X, S_\Y\rangle).$$
     \item If $\Hom(W_j, i_*(X_t)[1])=0, 1\leq t\leq m$,  then $$\langle S_\X, \mu^-_j(S_\Y)\rangle=\mu^-_{m+j}(\langle S_\X, S_\Y\rangle).$$
      \end{enumerate}
\end{enumerate}
\end{thm}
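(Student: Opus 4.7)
The plan is to compare the two sides of each asserted identity component by component, using the explicit recipe for the glued collection from Theorem~\ref{thm:intro 1} together with the defining approximation triangles for mutation of simple-minded collections. In both parts, one of the two ``outer'' simple-minded collections is kept fixed while the other is mutated; the bulk of the work is to track which components of the glued collection can move and to identify them explicitly.

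For Part (1), I would first observe that the mutation of the subcollection $\{i_*(X_1),\ldots,i_*(X_m)\}$ at $i_*(X_i)$ agrees with $i_*$ applied to $\mu_i^{\pm}(S_\X)$. Indeed, since $i_*$ is a fully faithful triangulated functor, the Hom spaces and approximation triangles defining the mutations in $\X$ transfer verbatim to $\T$, so $\mu_i^+(i_*(X_i))=i_*(X_i)[1]=i_*(X_i')$ and $i_*(X_k')$ is the mutation of $i_*(X_k)$ at $i_*(X_i)$ for $k\neq i$. The real content is to show that the mutation of each $W_j$ at $i_*(X_i)$ equals the $W_j'$ produced by gluing $\mu_i^+(S_\X)$ with $S_\Y$. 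To this end, I would apply $i^!$ to the defining triangle of $W_j$ to identify $i^!W_j\cong \tau^{\geq 1}i^!j_!(Y_j)$, then use adjunction to compute $\Hom_\T(i_*(X_i),W_j[k])\cong \Hom_\X(X_i,i^!W_j[k])$, and finally invoke that the new truncation $\tau'^{\leq 0}$ attached to $\mu_i^+(S_\X)$ differs from $\tau^{\leq 0}$ by an HRS tilt at the torsion pair generated by $X_i$ in the heart. Feeding this relation into the defining triangle for $W_j'$ should reshuffle it into precisely the approximation triangle realizing $\mu_i^+(W_j)$ at $i_*(X_i)$.

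For Part (2) the $t$-structure on $\X$ is unchanged, so $\tau^{\leq 0}$ stays the same and the $i_*(X_t)$'s should remain fixed under the mutation on $\T$; the vanishing hypothesis $\Hom(i_*(X_t),W_j[1])=0$ (respectively $\Hom(W_j,i_*(X_t)[1])=0$) is exactly what trivializes the approximation triangle at $W_j$ for each $i_*(X_t)$, and so guarantees this stability. For the $W$-components, the $j$-th new object comes from $\mu_j^+(S_\Y)_j=Y_j[1]$, which glues, via the shifted defining triangle, to $W_j[1]$, agreeing with $\mu_{m+j}^+(\langle S_\X,S_\Y\rangle)_j$. For $l\neq j$, I would apply $j_!$ to the mutation triangle in $\Y$ defining $\mu_j^+(S_\Y)_l$, use that $j_!$ is fully faithful and triangulated, and then pass through the gluing recipe with the unchanged $\tau^{\leq 0}$ to identify the resulting $W$-type object with the mutation of $W_l$ at $W_j$ in $\T$; the dual argument handles $\mu^-$.

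The main obstacle is the HRS-compatibility in Part (1): making the tilt description of $\tau'^{\leq 0}$ explicit enough that the gluing triangle for $W_j'$ can be rewritten as the mutation approximation for $\mu_i^+(W_j)$, together with a careful sign-and-shift analysis so that the universal extension by $i_*(X_i)$ (or $i_*(X_i)[1]$, depending on direction) matches on the nose. Part (2) is more formal given the compatibility between $j_!$ and $\tau^{\leq 0}$, but still requires bookkeeping of the universal extensions entering the gluing triangle for each $W_l$, as well as verifying that the rigidity of $Y_j$ passes to rigidity of $W_j$ so that the mutation on $\T$ is defined in the first place.
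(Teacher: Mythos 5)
Your proposal correctly identifies several structural facts that the paper also establishes and uses: rigidity of $X_i$, $Y_j$ passes to rigidity of $i_*(X_i)$ and $W_j$ (Lemma~\ref{lem: rigidity}); the hypotheses in part (2) are exactly what keeps the first $m$ terms fixed under the $\T$-mutation (Lemma~\ref{lem:first m-items}); and the $(m+j)$-th component of both sides in (2) is $W_j[1]$. However, the heart of the theorem --- that the remaining $W$-type components on the two sides agree --- is exactly where your plan is a sketch rather than a proof, and this is where your route diverges sharply from what the paper does.

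You propose to directly identify the $j$-th component of $\mu_i^+(\langle S_\X,S_\Y\rangle)$ (a cone on an $\add i_*(X_i)$-approximation of $W_j[-1]$) with the glued object $W_j'$ built from the new truncation $\tau'^{\leq 0}$, by an HRS-tilt argument. But the claim that ``feeding this relation into the defining triangle for $W_j'$ should reshuffle it into precisely the approximation triangle'' is not substantiated: the two triangles are built from entirely different data (a truncation triangle in $\X$ pushed into $\T$ and an approximation triangle in $\T$ itself), and aligning them requires constructing an explicit isomorphism, which involves octahedra comparing $\tau^{\leq 0}i^!j_!(Y_j)$ with $\tau'^{\leq 0}i^!j_!(Y_j)$ and then matching the resulting third object against the approximation cone --- a substantial chain of checks you do not carry out. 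Similarly for part (2), the phrase ``pass through the gluing recipe ... and identify the resulting $W$-type object with the mutation of $W_l$ at $W_j$'' suppresses the crucial compatibility one must prove between the minimal left $\add(Y_j)$-approximation in $\Y$ and the minimal left $\add(W_j)$-approximation in $\T$. This compatibility is precisely the content of the paper's Lemma~\ref{lem:key lem}, which shows that $j^!$ carries a minimal left $\add(W_j)$-approximation to a minimal left $\add(Y_j)$-approximation; it uses the hypothesis $\Hom(i_*(X_t),W_j[1])=0$ in an essential way, and it is not a formal consequence of $j_!$ being fully faithful, since $j_!(Y_j)\neq W_j$.

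The paper actually takes a genuinely different route that sidesteps the need for explicit isomorphisms entirely. It exploits that $\geq$ is a \emph{partial} order on isomorphism classes of simple-minded collections (so antisymmetry holds) and proves the equalities by establishing both inequalities: first $\mu_i^{\pm}(S_\T)\geq S^{\pm}_l$ and $\mu^{\pm}_{m+j}(S_\T)\geq S^{\pm}_r$ from the order-preservation of gluing (Theorem~\ref{thm:preserve order}) together with $S^-_*\geq S_\T\geq S^+_*$; then the reverse inequalities $S^+_l\geq \mu_i^+(S_\T)$ and $S^+_r\geq \mu^+_{m+j}(S_\T)$ by a sequence of $\Hom$-vanishing computations along the triangles $(\#),(\#\#)$ and the mutation triangles, with Lemma~\ref{lem:key lem} supplying the key injectivity in the hardest case (iii). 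Your direct, object-by-object matching is conceptually appealing, and the HRS picture you describe is correct in spirit, but unless you fill in the truncation-comparison octahedra and prove the analogue of Lemma~\ref{lem:key lem}, the argument has a genuine gap at its central step.
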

The conditions in $(a), (b)$ are necessary for ensuring the simple-minded collections on both sides admit the same first $m$-{terms.}

Finally, we show our gluing technique for simple-minded collections is compatible with those for silting objects, $t$-structures, and co-$t$-structures {under mild conditions.} Indeed, the compatibility between gluing silting objects and gluing co-$t$-structures is shown in \cite{LVY2014} and the compatibility between gluing simple-minded collections and gluing $t$-structures is shown in {Theorem~\ref{thm:intro 1}}. So to reach our goal, it suffices to show the compatibility between gluing $t$-structures and co-$t$-structures, this has been proved in \cite[Proposition 6.14]{SZ2022} with a mild condition.

The contents of this paper are organized as follows.
In Section~\ref{sec:pre}, we {\color{black}fix} notation and recall some basic concepts involved. In Section~\ref{sec:gluing}, we present the methods for gluing simple-minded collections and prove Theorem~\ref{thm:intro 1}. In Section~\ref{sec:order}, we compare the gluing technique with order and the basic operation of mutation of simple-minded collections and prove Theorem~\ref{thm:intro 2}. ~\\

{\em Acknowledgements.} The authors would like to thank Prof. Bin Zhu for stimulating discussions and pointing out typographical errors.  They are grateful to Prof. Dong Yang for answering our questions and Prof. 
Alexandra Zvonareva for introducing us to the paper \cite{SZ2022}. They gratefully thank the referee for his/her very helpful comments.

\section{Preliminaries}\label{sec:pre}
\subsection{{Notation}}
In this paper,  {$K$ will denote} an algebraically closed field. All triangulated categories are assumed to be Hom-finite Krull-Schmidt over the {  field} $K$. Denote the shift functors of all the triangulated categories by $[1]$. For convenience, the functor $\Hom(-, -)$ in some diagrams is denoted by $(-, -)$ simply.
The composition of $f\in\Hom(X, Y)$ and $g\in\Hom(Y, Z)$ is defined by $gf\in\Hom(X, Z)$. Let $\C$ be a category. For $X\in\C$, $\add (C)$ denotes the smallest full subcategory of $\C$ containing $X$ and closed under direct summands of finite direct sums. Let $f: M\to N$ be a morphism. $f$ is a {\em left $\add(X)$-approximation} if $N\in \add(X)$ and the induced map $f_*:\Hom(N, X)\to \Hom(M, X)$ is surjective. $f$ is {\em left minimal} if for any $g: N\to N$ satisfies $gf=f$, then $g$ {  is an isomorphism.} $f$ is called a {\em minimal left approximation} if it is both a left approximation and left minimal. Right approximation, right minimal, and minimal right approximation are defined dually.

Let $A$ be a finite dimensional $K$-algebra, denote $\Mod A$ (resp., $\smodA, \projA$) the category of right $A$-modules (resp., finite-dimensional right $A$-modules, finite dimensional projective right $A$-modules), denote $\D(\Mod A)$ (resp., $\D^b(\smodA)$, $\K^b(\projA)$) the derived category (resp., bounded derived category, homotopy category of bounded complexes of $\projA$).

\subsection{Recollements}\label{subsec:recollement}
Let $\mathscr{T}$ be a triangulated category and $\X, \Y$ be triangulated subcategories of $\mathscr{T}$.
We say that $\mathscr{T}$ is a {\em recollement} \cite{BBD1982} of $\X$
and $\Y$ if there are six triangle functors as in the diagram
$$\xymatrix{\X\ar^-{i_*=i_!}[rr]&&\mathscr{T}\ar^-{j^!=j^*}[rr]
\ar^-{i^!}@/^1.2pc/[ll]\ar_-{i^*}@/_1.6pc/[ll]
&&\Y\ar^-{j_*}@/^1.2pc/[ll]\ar_-{j_!}@/_1.6pc/[ll]}$$
such
that
\begin{enumerate}
  \item $(i^*,i_*),(i_!,i^!),(j_!,j^!)$ and $(j^*,j_*)$ are adjoint pairs;
  \item $i_*,j_*$ and $j_!$ are fully faithful functors;
  \item $i^!j_*=0$ (and thus also $j^! i_!=0$ and $i^*j_!=0$); and
  \item for each object $T\in\mathscr{T}$, there are two triangles in $\T$
$$i_!i^!(T)\to T\to j_*j^*(T)\to i_!i^!(T)[1],$$
$$j_!j^!(T)\to T\to i_*i^*(T)\to j_!j^!(T)[1].$$
\end{enumerate}

\subsection{$t$-structures}
A pair $(\T^{\leq 0}, \T^{\geq 0})$ of subcategories of $\T$ is called a
{\em $t$-structure} \cite{BBD1982} if it satisfies
\begin{enumerate}
  \item $\T^{\leq 0}[1]\subset \T^{\leq 0}$ and $\T^{\geq 0}[-1]\subset \T^{\geq 0}$;
  \item $\Hom(\T^{\leq 0}, \T^{\geq 0}[-1])=0$;
  \item For any $T\in\T$, there exists a triangle
  $$U\longrightarrow T\longrightarrow V\longrightarrow U[1],$$
 where $U\in\T^{\leq 0}$ and $V\in\T^{\geq 0}[-1]$.
\end{enumerate}
For a $t$-structure $(\T^{\leq 0}, \T^{\geq 0})$, the subcategory {  $\T^{\leq 0}\bigcap\T^{\geq 0}$ }is called the {\em heart} of the $t$-structure. It is an abelian category (\cite{BBD1982}).
Set $\T^{\leq -i}=\T^{\leq 0}[i]$ and $\T^{\geq -i}=\T^{\geq 0}[i]$.
A $t$-structure is {\em bounded} if $\bigcup_{i\in\mathbb{Z}}\T^{\leq i}=\T=\bigcup_{i\in\mathbb{Z}}\T^{\geq i}$. { A bounded $t$-structure is called {\em algebraic} ({\color{black}\cite[Section 2.3]{W2010}},\cite[after Lemma 3.20]{AMY2019}) if it has a length heart with finitely many isomorphism classes of simple objects.}

Gluing $t$-structures has been studied in \cite[Theorem 1.4.10]{BBD1982}. Suppose that $\T$ is a
recollement of triangulated categories $\X$ and $\Y$. Given $t$-structures $(\X^{\leq 0}, \X^{\geq 0})$ and $(\Y^{\leq 0}, \Y^{\geq 0})$ of $\X$ and $\Y$, then we get a glued $t$-structure $(\T^{\leq 0}, \T^{\geq 0})$ on $\T$, where
 \begin{align*}
    \T^{\leq 0}=& \{T\in\T\mid i^{*}(T)\in \X^{\leq 0}, j^{!}(T)\in \Y^{\leq 0}\}, \\
    \T^{\geq 0}=& \{T\in\T\mid i^{!}(T)\in \X^{\geq 0}, j^{!}(T)\in \Y^{\geq 0}\}.
 \end{align*}

 The following easy lemma explains the relation among these $t$-structures.

\begin{lem}\label{lem:glue t}
 {$i_*(\X^{\leq 0})\subseteq \T^{\leq 0}, j_!(\Y^{\leq 0})\subseteq \T^{\leq 0}$ and $i_*(\X^{\geq 0})\subset\T^{\geq 0}, j_*(\Y^{\geq 0})\subseteq \T^{\geq 0}$;}
\end{lem}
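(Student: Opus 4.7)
The plan is to verify each of the four containments directly from the definition of the glued $t$-structure, by feeding the objects $i_*(X)$, $j_!(Y)$, $j_*(Y)$ into the two functors $i^*, i^!, j^!$ and using the standard identities that come with any recollement. The key ingredients I will use are: the full faithfulness of $i_*$ (equivalently $i_! = i_*$) which gives $i^* i_* \cong \id_{\X}$ and $i^! i_* \cong \id_{\X}$; the full faithfulness of $j_!$ and $j_*$, which give $j^! j_! \cong \id_{\Y}$ and $j^! j_* = j^* j_* \cong \id_{\Y}$; and the vanishing relations $i^* j_! = 0$, $i^! j_* = 0$, and $j^! i_* = 0$ listed in Subsection~\ref{subsec:recollement}. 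Since every piece $\X^{\leq 0}$, $\X^{\geq 0}$, $\Y^{\leq 0}$, $\Y^{\geq 0}$ of a $t$-structure contains the zero object, any functor that annihilates an object automatically lands in all four pieces.

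Concretely, I would proceed containment by containment. For $X \in \X^{\leq 0}$, I check that $i^* i_*(X) \cong X \in \X^{\leq 0}$ and $j^! i_*(X) = 0 \in \Y^{\leq 0}$, giving $i_*(X) \in \T^{\leq 0}$. For $Y \in \Y^{\leq 0}$, I check $i^* j_!(Y) = 0 \in \X^{\leq 0}$ and $j^! j_!(Y) \cong Y \in \Y^{\leq 0}$, so $j_!(Y) \in \T^{\leq 0}$. For $X \in \X^{\geq 0}$, I check $i^! i_*(X) = i^! i_!(X) \cong X \in \X^{\geq 0}$ and $j^! i_*(X) = 0 \in \Y^{\geq 0}$, yielding $i_*(X) \in \T^{\geq 0}$. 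Finally, for $Y \in \Y^{\geq 0}$, I check $i^! j_*(Y) = 0 \in \X^{\geq 0}$ and $j^! j_*(Y) = j^* j_*(Y) \cong Y \in \Y^{\geq 0}$, so $j_*(Y) \in \T^{\geq 0}$.

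There is no real obstacle here: the lemma is entirely formal and reduces to bookkeeping with the recollement identities. The only thing to be careful about is matching the correct pair of functors against the correct side of the $t$-structure (note that the definition of $\T^{\geq 0}$ uses $i^!$ and $j^!$, not $i^*$ and $j^*$), which is why the identity $i^! j_* = 0$ rather than $i^* j_* = 0$ is the one actually needed for the last containment.
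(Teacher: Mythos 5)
Your proof is correct and is exactly the routine verification the paper suppresses (the lemma is stated without proof as "easy"): plug each object into the defining conditions of the glued $t$-structure and apply the full-faithfulness identities $i^*i_* \cong \id$, $i^!i_! \cong \id$, $j^!j_! \cong \id$, $j^*j_* \cong \id$ together with the vanishing relations $j^!i_* = 0$, $i^*j_! = 0$, $i^!j_* = 0$. Your note that the $\T^{\geq 0}$ side uses $i^!$ (hence $i^!j_* = 0$ rather than $i^*j_! = 0$) is precisely the one point worth being careful about; nothing is missing.
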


\subsection{Simple-minded collections}\label{subsec:smc}
Let $\T$ be a triangulated category. A  collection $S=\{S_{1},S_{2},\cdots,S_{n}\}$  of objects in $\T$ is called a {\em simple-minded collection} (cohomologically Schurian set of generators in\cite{AI2009}, \cite[Hypothesis]{RR2017}) if
\begin{enumerate}
  \item $\mathsf{dim}_K\Hom(S_{i},S_{j})={\delta_{ij}}$, where $\delta$ is the Kronecker function;
  \item {$\S$} generates $\T$ as a triangulated category, i.e., $\T$ is the smallest thick triangulated subcategory containing {$\S$};
  \item $\Hom(S_{i}, S_{j}[n])=0$ for $n<0$.
\end{enumerate}
Two simple-minded collections are said {\em isomorphic}\footnote{In Koenig and Yang's paper \cite{KY2014}, they used "equivalent", but when we asked Yang about this notion, he said they should have used "isomorphic".} \cite{KY2014} if they are the same up to an isomorphism. It is obvious that a set $S$ is a simple-minded collection in $\T$ if and only if so is in the opposite category $\T^{\op}$.

In a Hom-finite Krull-Schmidt triangulated category, there is a one-to-one correspondence between isomorphism classes of
simple-minded collections and algebraic $t$-structures (See \cite[Section 5.3, 5.5]{KY2014}). Let $S$ be a simple-minded collection of $\T$, then the corresponding algebraic $t$-structure is
$(\Filt S[\geq 0], \Filt S[\leq 0])$, where
 \begin{align*}
   \Filt S[\geq 0]= &~ \text{extension closure of}~S[m],~m\geq 0, \\
   \Filt S[\leq 0]= &~ \text{extension closure of}~S[m],~m\leq 0.
 \end{align*}
 This construction is given firstly by AI-Nofayee \cite[Corollary 3 and Proposition 4]{AI2009} in the setting of derived categories.
 Let $(\T^{\leq 0}, \T^{\geq 0})$ be an algebraic $t$-structure of $\T$, then a set of non-isomorphic simple objects of its heart is a simple-minded collection of $\T$. Mutations of algebraic $t$-structures are defined in \cite[Section 7.3]{KY2014}, and it is compatible with those of simple-minded collections.

\subsection{Co-$t$-structures}
 A pair $(\T_{\geq 0}, \T_{\leq 0})$ of subcategories of $\T$ is called a
{\em co-$t$-structure} \cite{Bond2010, Pauk2008} if it satisfies
\begin{enumerate}
\item both $\T_{\geq 0}$ and $\T_{\leq 0}$ are additive and closed under direct summands,
  \item $\T_{\geq 0}[-1]\subset \T_{\geq 0}$ and $\T_{\leq 0}[1]\subset \T_{\leq 0}$,
  \item $\Hom(M, N[1])=0$ for $M\in\T_{\geq 0}$ and $N\in \T_{\leq 0}$
  \item  for $T\in\T$, there exists a triangle
  $$U\longrightarrow T\longrightarrow V\longrightarrow U[1]$$
 where $U\in\T_{\geq 0}$ and $V\in\T_{\leq 0}[1]$.
\end{enumerate}
The co-$t$-structure is {\em bounded }if $\bigcup_{i\in\mathbb{Z}}\T_{\geq 0}[i]=\T=\bigcup_{i\in\mathbb{Z}}\T_{\leq 0}[i]$.

Gluing co-$t$-structures has been studied in \cite[Section 8.2]{Bond2010}. Suppose that $\T$ is a
recollement of triangulated categories $\X$ and $\Y$.
Given co-$t$-structure $(\X_{\geq 0}, \X_{\leq 0})$ on $\X$ and co-$t$-structure $(\Y_{\geq 0}, \Y_{\leq 0})$ on $\Y$, then there is a glued co-$t$-structure $(\T_{\geq 0}, \T_{\leq 0})$, where
\begin{align*}
 \T_{\geq 0}= & \{V\in \T\mid i^{*}(V)\in \X_{\geq 0}, j^{!}(V)\in \Y_{\geq 0}\} \\
 \T_{\leq 0}= & \{U\in \T\mid i^{!}(U)\in \X_{\leq 0}, j^{!}(U)\in \Y_{\leq 0}\}
\end{align*}

The following easy lemma explains the relation among these co-$t$-structures.

\begin{lem}\label{lem:glue co-t}
 {$i_*(\X_{\geq 0})\subseteq \T_{\geq 0},  j_!(\Y_{\geq 0})\subseteq \T_{\geq 0}$ and $i_*(\X_{\leq 0})\subseteq \T_{\leq 0}, j_*(\Y_{\leq 0})\subseteq \T_{\leq 0}$}.
\end{lem}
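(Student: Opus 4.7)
The plan is to verify each of the four inclusions directly by unwinding the definitions of the glued co-$t$-structure
$$\T_{\geq 0}=\{V\in\T\mid i^*(V)\in\X_{\geq 0},\; j^!(V)\in\Y_{\geq 0}\},\qquad \T_{\leq 0}=\{U\in\T\mid i^!(U)\in\X_{\leq 0},\; j^!(U)\in\Y_{\leq 0}\},$$
and then invoking the standard identities governing a recollement. No new constructions are required; the work is entirely bookkeeping at the level of functor identities.

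First I would recall the three facts that the recollement axioms immediately supply: (a) since $i_*=i_!$ is fully faithful, the units and counits of the two adjunctions $(i^*,i_*)$ and $(i_!,i^!)$ give natural isomorphisms $i^*i_*\cong\id_\X\cong i^!i_*$; (b) since $j_!$ and $j_*$ are fully faithful, one likewise obtains $j^!j_!\cong\id_\Y\cong j^!j_*$ (using $j^!=j^*$); (c) axiom~(3) of the recollement gives the vanishing relations $j^!i_*=0$, $i^*j_!=0$, and $i^!j_*=0$. These identities allow us to compute both coordinates $(i^*(-),j^!(-))$ or $(i^!(-),j^!(-))$ on any object of the form $i_*(X)$, $j_!(Y)$, or $j_*(Y)$ without any further analysis.

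Now the four inclusions become one-line verifications. For $i_*(\X_{\geq 0})\subseteq\T_{\geq 0}$, given $X\in\X_{\geq 0}$ we have $i^*i_*(X)\cong X\in\X_{\geq 0}$ by (a) and $j^!i_*(X)=0\in\Y_{\geq 0}$ by (c), so $i_*(X)\in\T_{\geq 0}$. For $j_!(\Y_{\geq 0})\subseteq\T_{\geq 0}$, given $Y\in\Y_{\geq 0}$ we have $i^*j_!(Y)=0\in\X_{\geq 0}$ by (c) and $j^!j_!(Y)\cong Y\in\Y_{\geq 0}$ by (b). The inclusion $i_*(\X_{\leq 0})\subseteq\T_{\leq 0}$ follows from $i^!i_*\cong\id$ in (a) together with $j^!i_*=0$ in (c); and $j_*(\Y_{\leq 0})\subseteq\T_{\leq 0}$ follows from $i^!j_*=0$ in (c) together with $j^!j_*\cong\id$ in (b). There is no genuine obstacle here—the only thing to be careful about is matching the correct half of each adjoint triple with the correct half of the co-$t$-structure (note in particular the asymmetry: $i_*$ lands in both halves of $\T$, but on the $\Y$-side we must use $j_!$ for the coaisle $\T_{\geq 0}$ and $j_*$ for the aisle $\T_{\leq 0}$, reflecting the vanishings in~(c)).
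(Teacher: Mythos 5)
Your proof is correct and is clearly the intended argument: the paper states this as an ``easy lemma'' without proof, and your direct unwinding of the glued co-$t$-structure definition together with the recollement identities $i^*i_*\cong\id\cong i^!i_*$, $j^!j_!\cong\id\cong j^!j_*$, and the vanishings $j^!i_*=0$, $i^*j_!=0$, $i^!j_*=0$ is exactly the routine verification the authors are implicitly relying on.
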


\section{Gluing simple-minded collections}\label{sec:gluing}
{ The gluing method presented in this section is obtained implicitly in \cite{CPP2022}. Following the ideas of reduction in \cite{CPP2022} and gluing silting objects in \cite{LVY2014}, we will give an explicit construction of simple-minded collections in a recollement.} Let $\T$ be a triangulated category. Assume $\T$ is a recollement of triangulated categories $\X$ and $\Y$
$$\xymatrix{\X\ar^-{i_*=i_!}[rr]&&\mathscr{T}\ar^-{j^!=j^*}[rr]
\ar^-{i^!}@/^1.2pc/[ll]\ar_-{i^*}@/_1.6pc/[ll]
&&\Y\ar^-{j_*}@/^1.2pc/[ll]\ar_-{j_!}@/_1.6pc/[ll]}.$$

\begin{lem}\label{lem:set from reco}
  Let $S_{\X}=\{X_{1},X_{2},\cdots,X_{m}\}, S_{\Y}=\{Y_{1},Y_{2},\cdots,Y_{n}\}$ be simple-minded collections in $\X$ and $\Y$ respectively. Then
  \begin{enumerate}
   \item $\Hom(i_*(X_i), j_*(Y_j))=0=\Hom(j_!(Y_j), i_*(X_i)),~1\leq i\leq m, 1\leq j\leq n$;
    \item both $\{i_*(X_1),\cdots, i_*(X_m),  j_*(Y_1), \cdots,  j_*(Y_n)\}$ and $\{i_*(X_1),\cdots, i_*(X_m),$\\
        $ j_!(Y_1), \cdots,  j_!(Y_n)\}$ generate $\T$.
  \end{enumerate}
\end{lem}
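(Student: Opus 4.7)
The plan is to reduce both statements to the adjunctions and vanishing relations built into the definition of a recollement, together with the fact that a triangle functor sends a thick triangulated subcategory generated by a set of objects into the thick triangulated subcategory generated by the image of that set. There is no serious obstacle here: the argument is pure bookkeeping with the recollement axioms, and the only subtlety is keeping track of which adjunction and which vanishing ($j^{!}i_{!}=0$ versus $i^{*}j_{!}=0$) is invoked on each side.

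For part (1), I would compute each Hom by the appropriate adjunction. Using the adjoint pair $(j^{*},j_{*})$,
$$\Hom(i_{*}(X_i),\, j_{*}(Y_j)) \cong \Hom(j^{*} i_{*}(X_i),\, Y_j),$$
and the recollement axiom $j^{!} i_{!} = 0$ (equivalent to $j^{*} i_{*} = 0$ since $j^{!}=j^{*}$ and $i_{*}=i_{!}$) shows this vanishes. Dually, using $(j_{!},j^{!})$,
$$\Hom(j_{!}(Y_j),\, i_{*}(X_i)) \cong \Hom(Y_j,\, j^{!} i_{*}(X_i)) = 0$$
by the same vanishing; alternatively one could invoke $(i^{*},i_{*})$ together with $i^{*} j_{!} = 0$.

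For part (2), I would use the two canonical recollement triangles. For any $T\in\T$ one has
$$i_{!}i^{!}(T)\to T\to j_{*}j^{*}(T)\to i_{!}i^{!}(T)[1] \quad\text{and}\quad j_{!}j^{!}(T)\to T\to i_{*}i^{*}(T)\to j_{!}j^{!}(T)[1].$$
Since $S_\X$ generates $\X$ as a thick triangulated subcategory, both $i^{*}(T)$ and $i^{!}(T)$ lie in the thick closure of $S_\X$ in $\X$; because $i_{*}=i_{!}$ is a triangle functor, $i_{*}i^{*}(T)$ and $i_{!}i^{!}(T)$ then lie in the thick subcategory of $\T$ generated by $\{i_{*}(X_1),\dots,i_{*}(X_m)\}$. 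Similarly, $j^{*}(T)$ and $j^{!}(T)$ lie in the thick closure of $S_\Y$ in $\Y$, so $j_{*}j^{*}(T)$ and $j_{!}j^{!}(T)$ lie in the thick subcategories of $\T$ generated by $\{j_{*}(Y_1),\dots,j_{*}(Y_n)\}$ and $\{j_{!}(Y_1),\dots,j_{!}(Y_n)\}$ respectively. The first triangle then exhibits $T$ as an iterated extension of objects in the thick closure of the first generating set, and the second triangle does the same for the second set, so both sets generate $\T$ as required.
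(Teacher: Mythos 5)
Your proof is correct and is exactly the ``routine check'' the paper leaves to the reader: part (1) follows from the adjunctions $(j^*,j_*)$ and $(j_!,j^!)$ together with the recollement vanishing $j^!i_*=j^*i_*=0$, and part (2) from the two canonical recollement triangles together with the standard fact that a triangle functor carries the thick closure of a set into the thick closure of its image.
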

\begin{proof}
By a routine check.
\end{proof}

The collections in Lemma~\ref{lem:set from reco}(2) satisfy the condition (2) in the definition of simple-minded collections. But in general, they do not satisfy the conditions (1) or (3). Let's give an example.

\begin{exam}\label{exam:non smc}
Let $A$ be the algebra given by the quiver
$$
\xymatrix{1~\bullet\ar@<-0.8ex>[r]_{\beta}
&\bullet~2\ar[l]_{\alpha}
}$$
with the relation $\beta\alpha=0$. Note that $A$ has a finite global dimension. Set $e=e_1$, $A\to A/AeA$ is a homological epimorphism (since $_AAeA$ is projective). Then we have the following  recollement (for instance, see \cite[Subsection 2.1]{HKLY2017}, where $eAe\simeq k$ and $A/AeA\simeq k$
$$\xymatrix{\D^{b}(\smodAeAe)\ar^{-\otimes_{A/AeA}A/AeA}[r]&\D^{b}(\smodA)
\ar^-{-\otimes^{L}_{A}Ae}[r]
\ar^-{\RHom_{A}(A/AeA,-)}@/^1.2pc/[l]\ar_-{-\otimes^{L}_{A}A/AeA}@/_1.6pc/[l]
&\D^{b}(\smod eAe)\ar^-{\RHom_{eAe}(Ae,-)}@/^1.2pc/[l]\ar_-{-\otimes^{L}_{eAe}eA}@/_1.6pc/[l]}$$

Let $S_{1}$ and $S_{2}$ be the simple modules of $\smodA$. Then $S_1$ and $S_2$ are also simple $eAe$-module and $A/AeA$-module, respectively. Furthermore, there are isomorphisms
\begin{center}
  \begin{tabular}{l}
   $S_{2}\otimes_{A/AeA}A/AeA\simeq S_{2}$,\\
  $S_{1}\otimes^{L}_{eAe}eA\simeq eA$,\\
  $\RHom_{eAe}(Ae, S_{1})\simeq I_{1}$,
  \end{tabular}
\end{center}
where $I_1$ is the indecomposable injective module corresponding to $e$. Since  $$\Hom_{A}(S_{2}, eA)\neq 0~\text{and}~\Hom_{A}(I_{1},S_{2})\neq 0.$$
Then the sets
$\{S_{2}, I_1\}~\text{and}~\{S_{2}, eA\}$
are not simple-minded collections.
\end{exam}

As indicated in the above example, to obtain a simple-minded collection of $\T$ from $S_\X$ and $S_\Y$, we need to refine the set
$\{i_*(X_1),\cdots, i_*(X_m),  j_*(Y_1), \cdots,  j_*(Y_n)\}$ or the set $\{i_*(X_1),\cdots, i_*(X_m),  j_!(Y_1), \cdots,  j_!(Y_n)\}$.
We refine the first set {first}. { Note that, the set $i_*(S_\X)=\{i_*(X_1),\cdots, i_*(X_m)\}$ determines a $t$-structure $(\mathsf{Filt}~i_*(S_\X)[\geq 0],(\mathsf{Filt}~i_*(S_\X)[\geq 0])^{\perp}[1])$ in $\T$ because $\mathsf{Filt}~S_\X[\geq 0]$ is contravariantly finite in $\mathcal{X}$ as the left-hand part of the $t$-structure $(\mathsf{Filt}~S_\X[\geq 0],\mathsf{Filt}~S_\X[\leq 0])$ in $\mathcal{X}$, and $i_*(\mathcal{X})$ is right admissible (or coreflective) in $\mathcal{T}$. Hence, following the idea of the proof in \cite[Proposition 5.5]{CPP2022}, to obtain a simple-minded collection of $\T$ from a given one $S_\Y$ of $\Y$, we need to truncate each object in $S_\Y$ with respect to the $t$-structure above. To be explicit,} we divide the process into three steps. 

{\bf Step 1:} Take the canonical decomposition for each $j_!(Y_i)$,
$$
i_!i^!(j_!(Y_i))\s{f}\longrightarrow j_!(Y_i)\longrightarrow j_*j^*(j_!(Y_i)) \simeq j_*(Y_i)\longrightarrow i_!i^!(j_!(Y_i))[1].
$$

{\bf Step 2:} Take the decomposition of $i^!j_!(Y_i)$ with respect to the $t$-structure $(\Filt S_\X[\geq 0], \Filt S_\X[\leq 0])$,
$$U_i\s{g}\longrightarrow i^!j_!(Y_i) \longrightarrow V_i \longrightarrow U_i[1].$$
where $U_i\in \Filt S_\X[\geq 0]$ and $V_i\in \Filt S_\X[\leq -1]$.

{\bf Step 3:} Use the octahedral axiom of triangulated categories, there is the following commutative diagram in $\T$, where the triangle in the first row  is the image of the triangle in Step 2 under $i_*$, and the triangle in the second column is the triangle in Step 1
$$\xymatrix{
i_{*}(U_{i})\ar[r]^{i_*(g)}\ar@{=}[d]
&i_{*}i^{!}j_{!}(Y_{i})\ar[r]\ar[d]^{f}
&i_{*}(V_{i})\ar[r]\ar[d]
&i_{*}(U_{i})[1]\ar@{=}[d]
\\
i_{*}(U_{i})\ar[r]^{fi_*(g)}
&j_{!}(Y_{i})\ar[r]\ar[d]
&W_{i}\ar[r]\ar[d]
&i_{*}(U_{i})[1]
\\
&j_*(Y_i)\ar@{=}[r]
&j_*(Y_i).
&
}$$
Hence, we get $W_i$ from $Y_i$ and obtain a new collection $\{i_*(X_1), \cdots, i_*(X_m), W_1, \cdots, W_n\}$ which is shown to be a simple-minded collection of $\T$.

For convenience, we mark the following triangles
\begin{align*}
  (\#) & \quad i_*(U_i)\s{fi_*(g)}\longrightarrow j_!(Y_i) \longrightarrow W_i \longrightarrow i_*(U_i)[1] \\
  (\#\#) & \quad i_*(V_i)\longrightarrow W_i\longrightarrow j_*(Y_i)\longrightarrow i_*(V_i)[1]
\end{align*}

\begin{lem}\label{lem:image of W}
The following hold.
\begin{enumerate}
  \item $j^!(W_i)\simeq Y_i$;
  \item $i^*(W_i)\simeq U_i[1]$;
  \item $i^!(W_i)\simeq V_i$.
\end{enumerate}
\end{lem}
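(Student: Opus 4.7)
The plan is to obtain each of the three isomorphisms by applying one of the functors $j^!$, $i^*$, $i^!$ to one of the defining triangles $(\#)$ or $(\#\#)$, and then collapsing two of the three vertices using the standard recollement identities
\[
j^! i_* = 0, \quad i^* j_! = 0, \quad i^! j_* = 0, \quad j^! j_! \simeq \id, \quad j^! j_* \simeq \id, \quad i^* i_* \simeq \id, \quad i^! i_* \simeq \id.
\]
These identities are all recorded in Subsection~\ref{subsec:recollement} (together with the fact that $i_*$, $j_!$, $j_*$ are fully faithful). The role of $(\#)$ is to erase the $i$-part via $i^*$ (or to read off the $j^!$-part), while $(\#\#)$ is the natural triangle to apply $i^!$ to, since $i^! j_*(Y_i) = 0$ cleanly cuts off the $j$-summand.

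For (1), I would apply the triangle functor $j^!$ to $(\#)$. The outer terms become $j^! i_*(U_i) = 0$ and $j^! i_*(U_i)[1] = 0$, while the middle term becomes $j^! j_!(Y_i) \simeq Y_i$. The resulting triangle then forces $j^!(W_i) \simeq Y_i$. For (2), apply $i^*$ to the same triangle $(\#)$: now $i^* j_!(Y_i) = 0$, $i^* i_*(U_i) \simeq U_i$, and $i^* i_*(U_i)[1] \simeq U_i[1]$, so the rotated triangle gives $i^*(W_i) \simeq U_i[1]$. For (3), apply $i^!$ to $(\#\#)$: the right-hand term vanishes since $i^! j_*(Y_i) = 0$, and $i^! i_*(V_i) \simeq V_i$, so $i^!(W_i) \simeq V_i$.

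None of these steps present a real obstacle; the only point that deserves a sentence of justification is that the isomorphisms coming from the adjunctions are natural enough that the triangles we obtain are genuine distinguished triangles (this is automatic, since the recollement functors are triangle functors and the unit/counit maps of $i^* \dashv i_* \dashv i^!$ and $j_! \dashv j^! \dashv j_*$ are natural transformations). Thus the proof will be a short diagrammatic verification, and I would present it as three brief bullets corresponding to the three claims, each a single application of the appropriate functor to the appropriate triangle.
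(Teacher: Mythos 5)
Your proof is correct and follows essentially the same approach as the paper: apply $j^!$, $i^*$, $i^!$ to the triangles $(\#)$ and $(\#\#)$ and use the recollement identities (equivalently, the full faithfulness of $i_*$, $j_!$, $j_*$) to collapse the outer or inner terms. The paper's own proof is a one-line remark to exactly this effect; you have merely spelled out which functor is applied to which triangle, which is helpful but not a different route.
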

\begin{proof}
  Consider the images of the triangles $(\#)$ and $(\#\#)$ under the functors $j^!, i^*$ and $i^!$,  {and using the fact that the unit (resp. counit) of an adjunction is an isomorphism if and only if the left (resp. right) adjoint is fully faithful.}
\end{proof}

\begin{lem}\label{lem:prep1}
$\Hom_{\T}(W_{i},W_{j}[t])\simeq \Hom_{\Y}(Y_{i},Y_{j}[t]),~1\leq i,j\leq n, ~t\leq 0$.
\end{lem}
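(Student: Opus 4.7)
The plan is to compute $\Hom_{\T}(W_i, W_j[t])$ by applying hom functors successively to the two triangles $(\#)$ and $(\#\#)$ associated with $W_i$ and $W_j$, and then to use (i) the orthogonality of the $t$-structure $(\Filt S_{\X}[\geq 0], \Filt S_{\X}[\leq 0])$ in $\X$ to kill contributions coming from $i_*(U_i)$ and $i_*(V_j)$, and (ii) the recollement identities $j^! i_* = 0$ and $j^! j_* \simeq \id$ to collapse the remaining term into $\Hom_{\Y}(Y_i, Y_j[t])$.

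More precisely, I first apply $\Hom_{\T}(-, W_j[t])$ to triangle $(\#)$ for $W_i$. By the adjunction $(i_*, i^!)$ together with Lemma~\ref{lem:image of W}(3), we have
$$\Hom_{\T}(i_*(U_i)[s], W_j[t]) \simeq \Hom_{\X}(U_i[s], V_j[t])$$
for $s = 0, 1$. Since $U_i \in \Filt S_{\X}[\geq 0]$ and $V_j \in \Filt S_{\X}[\leq -1]$, the shifts $V_j[t]$ and $V_j[t-1]$ both lie in $\Filt S_{\X}[\leq -1]$ when $t \leq 0$, so the $t$-structure orthogonality forces these two Hom-groups to vanish. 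The long exact sequence then yields an isomorphism $\Hom_{\T}(W_i, W_j[t]) \simeq \Hom_{\T}(j_!(Y_i), W_j[t])$.

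Next, I apply $\Hom_{\T}(j_!(Y_i), -)$ to triangle $(\#\#)$ for $W_j$. By the adjunction $(j_!, j^!)$, the terms of the form $\Hom_{\T}(j_!(Y_i), i_*(V_j)[t])$ translate into $\Hom_{\Y}(Y_i, j^! i_*(V_j)[t])$, which vanish because $j^! i_* = 0$ in a recollement. Hence the middle term is isomorphic to $\Hom_{\T}(j_!(Y_i), j_*(Y_j)[t]) \simeq \Hom_{\Y}(Y_i, j^! j_*(Y_j)[t]) \simeq \Hom_{\Y}(Y_i, Y_j[t])$, where the last isomorphism uses $j^! j_* \simeq \id$ (a consequence of $j_*$ being fully faithful and adjoint to $j^!$).

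The main technical point is the $t$-structure argument in the first step: one must keep careful track of the shifts to make sure that the restriction $t \leq 0$ is exactly what is needed for $V_j[t]$ and $V_j[t-1]$ to remain in the aisle $\Filt S_{\X}[\leq -1]$, so that the orthogonality with $U_i \in \Filt S_{\X}[\geq 0]$ applies. The rest is a purely formal combination of recollement adjunctions and fully faithfulness of $j_!$, $j_*$.
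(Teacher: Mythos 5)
Your proof is correct and follows the same overall strategy as the paper's: first reduce $\Hom_\T(W_i,W_j[t])$ to $\Hom_\T(j_!(Y_i),W_j[t])$ by killing the $i_*(U_i)$ terms, then collapse the latter to $\Hom_\Y(Y_i,Y_j[t])$. The two places where you diverge from the paper's bookkeeping are both fine and arguably cleaner. In the first reduction, you invoke the adjunction $(i_*,i^!)$ together with Lemma~\ref{lem:image of W}(3) to identify $\Hom_\T(i_*(U_i)[s],W_j[t])$ directly with $\Hom_\X(U_i[s],V_j[t])$ and then apply $t$-structure orthogonality; the paper instead applies $\Hom(i_*(U_i)[s],-)$ to the triangle $(\#\#)$ for $W_j$ and kills the two outer terms using $i_*$-fully-faithfulness with $t$-structure orthogonality on one side and $j^*i_*=0$ on the other. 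Your route saves one application of a triangle by exploiting the already-computed $i^!(W_j)\simeq V_j$. In the second step you apply $\Hom(j_!(Y_i),-[t])$ to $(\#\#)$ and use $j^!i_*=0$ and $j^!j_*\simeq\id$, whereas the paper applies it to $(\#)$ and uses $j^*i_*=0$ and fully-faithfulness of $j_!$; these are symmetric variants and yield the same conclusion, valid in fact for all integers $t$. Your tracking of the shifts $V_j[t]$, $V_j[t-1]\in\Filt S_\X[\leq -1]$ for $t\leq 0$ is exactly the point that makes the $t\leq 0$ hypothesis necessary, and you identified it correctly.
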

\begin{proof}
By applying $\Hom(-,W_{j}[t])$ { to} the triangle $(\#)$,
there is an exact sequence (denote $\Hom(-, -)$ simply by $(-, -)$)
$$\cdots\to (i_{*}(U_{i})[1], W_{j}[t])\to (W_{i},W_{j}[t])\to(j_{!}(Y_{i}),W_{j}[t])\to (i_{*}(U_{i}), W_{j}[t])\to\cdots.$$

{\bf Claim 1:} $\Hom(W_{i},W_{j}[t])\simeq \Hom(j_{!}(Y_{i}),W_{j}[t])$ for $t\leq 0$.

By applying $\Hom(i_{*}(U_{i}),-), \Hom(i_{*}(U_{i})[1], -[t])$ { to} the triangle $(\#\#)$,
then there are exact sequences
$$(i_{*}(U_{i}), i_{*}(V_{j})[t])\to (i_{*}(U_{i}), W_{j}[t])\to (i_{*}(U_{i}), j_{*}(Y_{j})[t])$$
and
$$(i_{*}(U_{i})[1], i_{*}(V_{j})[t])\to (i_{*}(U_{i})[1], W_{j}[t])\to (i_{*}(U_{i})[1], j_{*}(Y_{j})[t]).$$
Since $\Hom(i_{*}(U_{i}), i_{*}(V_{j})[t])= 0=\Hom(i_{*}(U_{i})[1], i_{*}(V_{j})[t])$ for $t\leq 0$ by the properties of $t$-structure, and $\Hom(i_{*}(U_{i})[k], j_{*}(Y_{j})[t])\simeq \Hom(j^*i_{*}(U_{i})[k],Y_{j}[t])=0$ for $k=0, 1$,
then we have
$$\Hom_{\T}(i_{*}(U_{i})[1],W_{j}[t])\simeq 0\simeq\Hom_{\T}(i_{*}(U_{i}),W_{j}[t]),~t\leq 0.$$
Therefore, the claim holds.

{\bf Claim 2:} $\Hom_{\T}(W_{i},W_{j}[t])\simeq \Hom_{\Y}(Y_{i},Y_{j}[t])$ for $t\leq 0$.

By applying $\Hom_{\T}(j_{!}(Y_{i}),-[t])$ { to} the triangle $(\#)$,
we obtain the following exact sequence
$$(j_{!}(Y_{i}), i_{*}(U_{j})[t])\to (j_{!}(Y_{i}), j_{!}(Y_{j})[t])\to (j_{!}(Y_{i}), W_{j}[t])\to (j_{!}(Y_{i}), i_{*}(U_{j})[t+1]),$$
Since $(j_{!}(Y_{i}), i_{*}(U_{j})[k])\simeq (Y_{i}, j^*i_{*}(U_{j})[k])=0, k=t, t+1$, then
$$\Hom_{\T}(j_{!}(Y_{i}), W_{j}[t])\simeq \Hom_{\T}(j_{!}(Y_{i}), {j_{!}(Y_{j})[t]})\simeq \Hom_{\Y}(Y_{i},Y_{j}[t]),~\forall t\in\mathbb{Z}.$$
It follows from claim 1 that claim 2 holds. We finish the proof.
\end{proof}

\begin{lem}\label{lem:prep2}
 For $1\leq i\leq m, 1\leq j\leq n, t\leq 0$, then
 $$\Hom_{\T}(i_{*}(X_{i}), W_{j}[t])= 0=\Hom_{\T}(W_{j}, i_{*}(X_{i})[t]).$$
\end{lem}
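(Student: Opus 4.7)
The plan is to sandwich each of the two Hom groups between vanishing terms obtained from the two defining triangles $(\#)$ and $(\#\#)$ of $W_j$, much in the spirit of the proof of Lemma~\ref{lem:prep1}. Since $X_i$ lies in the heart of the $t$-structure $(\Filt S_\X[\geq 0],\Filt S_\X[\leq 0])$ on $\X$, while $U_j\in\Filt S_\X[\geq 0]$ and $V_j\in\Filt S_\X[\leq -1]$, the $t$-structure orthogonality together with the standard vanishing $j^{!}i_{*}=0$ (equivalently $j^{*}i_{*}=0$) coming from the recollement will do all the work.

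For $\Hom_{\T}(i_{*}(X_{i}),W_{j}[t])$, I apply $\Hom_{\T}(i_{*}(X_{i}),-)$ to triangle $(\#\#)$ and obtain an exact sequence
$$\Hom_{\T}(i_{*}(X_{i}),i_{*}(V_{j})[t])\to \Hom_{\T}(i_{*}(X_{i}),W_{j}[t])\to \Hom_{\T}(i_{*}(X_{i}),j_{*}(Y_{j})[t]).$$
The right-hand term equals $\Hom_{\Y}(j^{*}i_{*}(X_{i}),Y_{j}[t])=0$ by the recollement axiom. The left-hand term, by full faithfulness of $i_{*}$, equals $\Hom_{\X}(X_{i},V_{j}[t])$; since $X_{i}\in\Filt S_\X[\geq 0]$ and $V_{j}[t]\in\Filt S_\X[\leq -1-t]\subseteq \Filt S_\X[\leq -1]$ for $t\leq 0$, the $t$-structure orthogonality gives this term is zero as well.

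For $\Hom_{\T}(W_{j},i_{*}(X_{i})[t])$, I dually apply $\Hom_{\T}(-,i_{*}(X_{i})[t])$ to triangle $(\#)$ to get an exact sequence
$$\Hom_{\T}(i_{*}(U_{j})[1],i_{*}(X_{i})[t])\to\Hom_{\T}(W_{j},i_{*}(X_{i})[t])\to \Hom_{\T}(j_{!}(Y_{j}),i_{*}(X_{i})[t]).$$
The right-hand term equals $\Hom_{\Y}(Y_{j},j^{!}i_{*}(X_{i})[t])=0$ since $j^{!}i_{*}=0$. The left-hand term equals $\Hom_{\X}(U_{j},X_{i}[t-1])$, which vanishes for $t\leq 0$ because $U_{j}\in\Filt S_\X[\geq 0]$ while $X_{i}[t-1]\in\Filt S_\X[\leq 1-t]$ sits in $\Filt S_\X[\leq -1]$ when shifted appropriately; more precisely $X_{i}\in$ heart implies $X_{i}[t-1]\in\Filt S_\X[\leq -1]$ for $t\leq 0$, and the orthogonality of the $t$-structure yields the vanishing.

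There is no real obstacle here: the argument is a direct two-step diagram chase that reduces the statement to the $t$-structure orthogonality in $\X$ (applied to the canonical truncation $(U_j,V_j)$ of $i^{!}j_{!}(Y_j)$) and to the vanishing $j^{!}i_{*}=j^{*}i_{*}=0$ built into the definition of a recollement. The only mild care needed is tracking the shift indices so that the condition $t\leq 0$ is used in the right place.
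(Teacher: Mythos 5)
Your proof is correct and takes essentially the same approach as the paper: apply $\Hom_{\T}(i_{*}(X_{i}),-)$ to the triangle $(\#\#)$ and $\Hom_{\T}(-,i_{*}(X_{i})[t])$ to the triangle $(\#)$, then use $j^{*}i_{*}=0$ together with the $t$-structure orthogonality in $\X$. One small slip in the shift arithmetic: for $V_{j}\in\Filt S_{\X}[\leq -1]$ one has $V_{j}[t]\in\Filt S_{\X}[\leq -1+t]$ (not $[\leq -1-t]$), and for $t\leq 0$ this lies in $\Filt S_{\X}[\leq -1]$ as needed; likewise $X_{i}[t-1]\in\Filt S_{\X}[\leq t-1]\subseteq\Filt S_{\X}[\leq -1]$. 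Your stated conclusions are the right ones, only the intermediate indices should read $-1+t$ and $t-1$.
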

\begin{proof}
 By applying $\Hom_{\T}(i_{*}(X_{i}),-)$ to the $t$-times shift of the triangle $(\#\#)$, we have an exact sequence
$$(i_{*}(X_{i}), i_*(V_i)[t])\longrightarrow (i_{*}(X_{i}), W_i[t])\longrightarrow (i_{*}(X_{i}), j_*(Y_i)[t]).$$
Note that {$\Hom(i_{*}(X_{i}), j_*(Y_i)[t])=\Hom(j^*i_*(X_i), Y_i[t])=0$} and for $t\leq 0, V_i[t]\in  \Filt S_\X[\leq -1]$, so $\Hom(i_{*}(X_{i}), i_*(V_i)[t])\simeq \Hom(X_{i}, V_i[t])=0$. Hence we have
 $$\Hom(i_{*}(X_{i}), W_{j}[t])=0.$$

By applying $\Hom_{\T}(-, i_{*}(X_{i})[t])$ { to} the triangle $(\#)$,
we have the exact sequence
$$(i_*(U_i)[1],  i_{*}(X_{i})[t])\longrightarrow (W_i,  i_{*}(X_{i})[t]) \longrightarrow (j_!(Y_i), i_{*}(X_{i})[t]).$$
Note that $\Hom(j_!(Y_i), i_{*}(X_{i})[t])\simeq \Hom(Y_i, j^*i_*(X_i)[t])=0$ and for $t\leq 0,~X_i[t]\in \Filt S_\X[\leq 0]$, so $\Hom(i_*(U_i)[1],  i_{*}(X_{i})[t])\simeq \Hom(U_i[1],  X_{i}[t])=0$ { since $U_i[1]\in \Filt S_\X[\geq 1]$}. Hence, we have
$$\Hom_{\T}(W_{j}, i_{*}(X_{i})[t])=0, t\leq 0.$$
This finishes the proof.
\end{proof}

Set $S_{\T}:=\{i_{*}(X_{1}),i_{*}(X_{2}),\cdots,i_{*}(X_{m}),W_{1},W_{2},\cdots,W_{n}\}$. Based on the above lemmas, we can prove the main theorem below.

\begin{thm}{\color{black}\textnormal{(\cite[Proposition 5.5]{CPP2022}, \cite[Theorem 3.1]{J2023})}}\label{thm:main}
$S_{\T}$ is a simple-minded collection in $\T$.
\end{thm}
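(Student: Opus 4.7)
The plan is to verify the three defining conditions of a simple-minded collection for $S_\T$ by systematically splitting every pair of objects into four cases according to whether each entry comes from $i_*(S_\X)$ or from $\{W_1,\ldots,W_n\}$, and then invoking the preparatory lemmas already in place.

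First I would handle the negative Hom-vanishing condition (3) together with the Schurian condition (1). For a pair $(i_*(X_i), i_*(X_j))$, fully-faithfulness of $i_*$ gives $\Hom_\T(i_*(X_i), i_*(X_j)[t]) \simeq \Hom_\X(X_i, X_j[t])$, so the required values follow from $S_\X$ being a simple-minded collection in $\X$. For a pair $(W_i, W_j)$, Lemma~\ref{lem:prep1} identifies the Hom space for $t\le 0$ with $\Hom_\Y(Y_i, Y_j[t])$, and the required values then follow from $S_\Y$ being a simple-minded collection in $\Y$. The two mixed pairs $(i_*(X_i), W_j)$ and $(W_j, i_*(X_i))$ are handled directly by Lemma~\ref{lem:prep2} (for both $t<0$ and $t=0$), which covers both the negative vanishing and the orthogonality part of the Schurian condition.

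For condition (2), generation, I would use the triangle $(\#)$
\[
i_*(U_i) \longrightarrow j_!(Y_i) \longrightarrow W_i \longrightarrow i_*(U_i)[1].
\]
Since $U_i \in \Filt S_\X[\geq 0]$, the object $i_*(U_i)$ lies in the thick subcategory $\thick i_*(S_\X)$ generated by $\{i_*(X_1),\ldots,i_*(X_m)\}$ in $\T$. Then $(\#)$ shows $j_!(Y_i) \in \thick S_\T$ for every $i$, so the collection $\{i_*(X_1),\ldots, i_*(X_m), j_!(Y_1),\ldots, j_!(Y_n)\}$ is contained in $\thick S_\T$. By Lemma~\ref{lem:set from reco}(2) this latter collection already generates $\T$, so $\thick S_\T = \T$.

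Assembling these pieces verifies the three axioms and finishes the proof. I do not anticipate a real obstacle here: all the technical content has been absorbed into Lemmas~\ref{lem:set from reco}, \ref{lem:image of W}, \ref{lem:prep1}, and \ref{lem:prep2}. The only mild point to be careful about is bookkeeping the case $t=0$ separately when verifying $\dim_K \Hom = \delta_{ij}$, since Lemma~\ref{lem:prep1} gives an isomorphism (not merely vanishing) that must be evaluated at $t=0$ to read off the Kronecker delta from the corresponding property of $S_\Y$.
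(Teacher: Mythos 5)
Your proof is correct and follows essentially the same route as the paper: Lemmas~\ref{lem:prep1} and~\ref{lem:prep2} (plus full faithfulness of $i_*$) for conditions (1) and (3), and the triangle $(\#)$ together with Lemma~\ref{lem:set from reco}(2) for generation. The paper states this more tersely but your case-by-case bookkeeping, including the $t=0$ check for the Kronecker delta, is exactly the content being invoked.
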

\begin{proof}
It follows from Lemma~\ref{lem:prep1} and Lemma~\ref{lem:prep2} that $S_{\T}$ satisfies the conditions (1) and (3) in the definition of simple-minded collections.
Since there exists the triangle $(\#)$ and $i_*(U_j)$ can be generated by $\{{i_{*}(X_1)}, \cdots, i_*(X_m)\}$, then each $j_{!}(Y_{j})$ can be generated by $S_{\T}$. By Lemma~\ref{lem:set from reco}, we know $S_{\T}$ generates $\T$. Therefore $S_\T$ is a simple-minded collection of $\T$.
\end{proof}

The second set can be refined to a simple-minded collection dually.
Indeed, we can refine the set $\{ i_*(X_1), \cdots, i_*(X_m), j_*(Y_1), \cdots,  j_*(Y_n) \}$ through the dual three steps.

{\bf Step 1':} Consider the canonical triangle of $j_*(Y_i)$,
$$j_!j^*j_*(Y_i)\simeq j_!(Y_i)\longrightarrow j_*(Y_i)\longrightarrow i_*i^*j_*(Y_i)\longrightarrow j_!j^*j_*(Y_i)[1]$$

{\bf Step 2':} Decompose $i^*j_*(Y_i)$ with respect to the corresponding algebraic $t$-structure $(\Filt S_\X[\geq 1], \Filt S_\X[\leq 1])$
$$M_i\longrightarrow i^*j_*(Y_i)\longrightarrow N_i\longrightarrow M_i[1]$$
where $M_i\in \Filt S_\X[\geq 1]$ and $N_i\in \Filt S_\X[\leq 0]$.

{\bf Step 3':} Use the octahedral axiom of triangulated categories, where the triangle in the third row is the image of the triangle in step 2' under $i_*$ and the triangle in the second column is from step 1',
$$\xymatrix{
&j_!(Y_i)\ar@{=}[r]\ar[d]
&j_!(Y_i)\ar[d]
&
\\
i_*(N_i)[-1]\ar@{=}[d]\ar[r]
&P_i\ar[d]\ar[r]
&j_*(Y_i)\ar[d]\ar[r]
&i_*(N_i)\ar@{=}[d]
\\
i_*(N_i)[-1]\ar[r]
&i_*(M_i)\ar[r]
& i_*i^*j_*(Y_i)\ar[r]
&i_*(N_i)
}$$

{For convenience, we mark the following triangles
\begin{align*}
  (\diamondsuit) & \quad i_*(N_i)[-1]\longrightarrow P_i \longrightarrow j_*(Y_i) \longrightarrow i_*(U_i) \\
  (\diamondsuit\diamondsuit) & \quad j_!(Y_i)\longrightarrow P_i\longrightarrow i_*(M_i)\longrightarrow j_!(Y_i)[1]
\end{align*}}

Set $S^\T:=\{i_*(X_1), \cdots, i_*(X_m), P_1, \cdots, P_n\}$. A dual discussion can prove that $S^\T$ is a simple-minded collection in $\T$. Here, we present an {\color{black}alternative} way to explain why this is true.
Note that the opposite of the recollement $(\X, \T, \Y)$ is of the form
$$\xymatrix{\X^{\op}\ar^-{i_*^{\op}=i_!^{\op}}[rr]
&&\mathscr{T}^{\op}\ar^-{{j^!}^{\op}={j^*}^{\op}}[rr]
\ar_-{{i^!}^{\op}}@/_1.6pc/[ll]\ar^-{{i^*}^{\op}}@/^1.2pc/[ll]
&&\Y^{\op}\ar_-{j_*^{\op}}@/_1.6pc/[ll]\ar^-{j_!^{\op}}@/^1.2pc/[ll]}.$$ 
$S_\X, S_\Y$ are also simple-minded collections in $\X^{\op}$ and $\Y^{\op}$ respectively. The glued simple-minded collection from $S_\X$ and $S_\Y$ along the opposite recollement by using step 1---step 3 coincides with the set $S^\T$, hence $S^\T$ is a simple-minded collection in $\T^{\op}$, so {\color{black}it is also a simple-minded collection in $\T$}.

By now, we have constructed two simple-minded collections $S_\T$ and $S^\T$. The relation between $S_\T$ and $S^\T$ is not obvious. We will show they are the same up to an isomorphism by comparing their corresponding $t$-structures. By $(\T^{\leq 0}, \T^{\geq 0})$ we denote the glued $t$-structure of $(\Filt S_\X[\geq 0], \Filt S_\X[\leq 0])$ and $(\Filt S_\Y[\geq 0], \Filt S_\Y[\leq 0])$ along the recollement.

\begin{lem}\label{lem:prep4.1}
The following hold.
\begin{enumerate}
  \item $j_!(\Filt S_\Y[\geq 0])\subset \Filt S_\T[\geq 0]$;
  \item $j_!(\Filt S_\Y[\geq 0])\subset \Filt S^\T[\geq 0]$;
  \item $S_\T\subset \T^{\leq 0}$;
  \item $S^\T\subset \T^{\leq 0}$.
\end{enumerate}
\end{lem}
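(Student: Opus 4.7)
The plan is to derive all four parts directly from the triangles $(\#),(\#\#),(\diamondsuit),(\diamondsuit\diamondsuit)$ produced in Steps 1--3 and 1$'$--3$'$, together with Lemma~\ref{lem:image of W} and the standard recollement identities $i^{*}i_{*}\simeq\id$, $j^{!}j_{*}\simeq\id$, $i^{*}j_{!}=0$, $j^{!}i_{*}=0$. Parts (1) and (2) are filtration statements inside $\T$, while parts (3) and (4) are membership checks against the glued-$t$-structure description $\T^{\leq 0}=\{T\mid i^{*}(T)\in\Filt S_\X[\geq 0],\ j^{!}(T)\in\Filt S_\Y[\geq 0]\}$.

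For part (1), I would first handle the case $Y=Y_i$: the triangle $(\#)$ realises $j_{!}(Y_i)$ as an extension of $W_i\in S_\T$ by $i_{*}(U_i)$, and since $U_i\in\Filt S_\X[\geq 0]$ and $i_{*}(S_\X)\subseteq S_\T$, the fact that $i_{*}$ is a triangle functor places $i_{*}(U_i)$ in $\Filt S_\T[\geq 0]$; hence $j_{!}(Y_i)\in\Filt S_\T[\geq 0]$. For a general $Y\in\Filt S_\Y[\geq 0]$, induction on the length of its filtration by positive shifts of $S_\Y$ completes the argument, because $j_{!}$ is triangulated and $\Filt S_\T[\geq 0]$ is extension-closed. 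Part (2) is dual: a rotation of $(\diamondsuit\diamondsuit)$ writes $j_{!}(Y_i)$ as an extension of $P_i\in S^\T$ by $i_{*}(M_i)[-1]$, and since $M_i\in\Filt S_\X[\geq 1]$ we have $i_{*}(M_i)[-1]\in\Filt S^\T[\geq 0]$; the same induction then applies.

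For parts (3) and (4), I would verify the two membership conditions on each generator. For $i_{*}(X_t)$, both conditions are immediate from $i^{*}i_{*}\simeq\id$ and $j^{!}i_{*}=0$. For $W_j$, Lemma~\ref{lem:image of W} gives $i^{*}(W_j)\simeq U_j[1]\in\Filt S_\X[\geq 1]$ and $j^{!}(W_j)\simeq Y_j\in S_\Y$. For $P_j$, applying $i^{*}$ to $(\diamondsuit\diamondsuit)$ and using $i^{*}j_{!}=0$ yields $i^{*}(P_j)\simeq M_j\in\Filt S_\X[\geq 1]$, while applying $j^{!}$ to $(\diamondsuit)$ with $j^{!}i_{*}=0$ and $j^{!}j_{*}\simeq\id$ yields $j^{!}(P_j)\simeq Y_j\in S_\Y$. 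No essential obstacle is expected: the whole lemma is organised bookkeeping with the constructed triangles, and the only mildly delicate point is tracking the positive-shift convention of $\Filt(-)[\geq 0]$ consistently when passing extensions through $i_{*}$ and $j_{!}$.
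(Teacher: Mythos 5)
Your proposal is correct and takes essentially the same route as the paper: parts (1) and (2) rest on the triangles $(\#)$ and a rotation of $(\diamondsuit\diamondsuit)$ together with extension-closure of $\Filt(-)[\geq 0]$, and part (3) reads off $i^{*}(W_j)\simeq U_j[1]$ and $j^{!}(W_j)\simeq Y_j$ from Lemma~\ref{lem:image of W}, exactly as the paper does. The only small variation is in part (4), where you verify the glued-$t$-structure conditions for $P_j$ by computing $i^{*}(P_j)\simeq M_j$ and $j^{!}(P_j)\simeq Y_j$ directly from $(\diamondsuit\diamondsuit)$ and $(\diamondsuit)$, whereas the paper instead notes that $(\diamondsuit\diamondsuit)$ exhibits $P_j$ as an extension of $i_{*}(M_j)$ by $j_{!}(Y_j)$, both already in $\T^{\leq 0}$ by Lemma~\ref{lem:glue t}, and invokes extension-closure of $\T^{\leq 0}$; both checks are equally short and correct.
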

\begin{proof}
(1)
Consider the triangle $(\#)$, i.e.
  $$i_*(U_i)\s{fi_*(g)}\longrightarrow j_!(Y_i) \longrightarrow W_i \longrightarrow i_*(U_i)[1].$$
  Since $i_*(U_i)\in i_*(\Filt S_\X[\geq 0])\subset \Filt S_\T[\geq 0], W_i\in \Filt S_\T[\geq 0]$ and $\Filt S_\T[\geq 0]$ is closed under extensions, then  $j_!(Y_i)\in\Filt S_\T[\geq 0]$. Hence the statement holds.

(2)
In the following triangle (i.e the shift  of the triangle $(\diamondsuit\diamondsuit)$)
$$i_*(M_i)[-1]\longrightarrow j_!(Y_i)\longrightarrow P_i\longrightarrow i_*(M_i),$$
$i_*(M_i)[-1]\in i_*(\Filt S_\X[\geq 0])\subset \Filt S^\T[\geq 0]$ and $P_i\in\Filt S^\T[\geq 0]$, then $j_!(Y_i)\in \Filt S^\T[\geq 0]$. Hence we have $j_!(\Filt S_\Y[\geq 0])\subset \Filt S^\T[\geq 0]$

(3)
Obviously, $i_*(S_\X)\subset \T^{\leq 0}$. Thanks to Lemma~\ref{lem:image of W} (1) and (2), we have $i^*(W_i)\simeq U_i[1]\in \Filt S_\X[\geq 0][1]\subseteq \Filt S_\X[\geq 0]$ and $j^!(W_i)\simeq Y_i\in \Filt S_\Y[\geq 0]$. Hence $W_i\in \T^{\leq 0}$. Therefore we have $S_\T\subset \T^{\leq 0}$.

(4)
Consider the triangle $(\diamondsuit\diamondsuit)$ above.
Since $j_!(Y_i), i_*(M_i)\in \T^{\leq 0}$ and $\T^{\leq 0}$ is closed under extensions, then $P_i\in \T^{\leq 0}$. Hence $S^\T\subset \T^{\leq 0}$, and so is $\Filt S^\T[\geq 0]$.
\end{proof}

\begin{prop}\label{prop:comp $t$-structure}
The following hold.
\begin{enumerate}
  \item $\Filt S_\T[\geq 0]=\T^{\leq 0}$;
  \item $\Filt S^\T[\geq 0]=\T^{\leq 0}$.
\end{enumerate}
\end{prop}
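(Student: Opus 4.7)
The plan is to handle both parts by a single argument, with only the explicit form of the generators changing between $S_\T$ and $S^\T$. One inclusion is essentially free: Lemma~\ref{lem:prep4.1}(3),(4) give $S_\T, S^\T \subseteq \T^{\leq 0}$, and since $\T^{\leq 0}$ is closed under the shift $[1]$ and under extensions, this immediately yields $\Filt S_\T[\geq 0] \subseteq \T^{\leq 0}$ and $\Filt S^\T[\geq 0] \subseteq \T^{\leq 0}$. The work is in the reverse inclusion.

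For an arbitrary $T \in \T^{\leq 0}$, the first step is to compare $T$ with the simple-minded $t$-structure via its truncation triangle
$$A \longrightarrow T \longrightarrow B \longrightarrow A[1],$$
where $A \in \Filt S_\T[\geq 0]$ and $B \in \Filt S_\T[\leq -1]$ (with the obvious replacement $S_\T \rightsquigarrow S^\T$ in part (2)). The goal is to force $B = 0$. Rotating and using the already-proved inclusion together with closure of $\T^{\leq 0}$ under extensions and $[1]$, one obtains $B \in \T^{\leq 0}$ as well.

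The key computation is that the coaisle $\Filt S_\T[\leq -1]$ behaves nicely under $j^!$ and $i^!$. By Lemma~\ref{lem:image of W}, each $W_i$ satisfies $j^!(W_i) = Y_i$ and $i^!(W_i) = V_i \in \Filt S_\X[\leq -1]$, and one also has $i^!(i_*(X_i)) = X_i$. Passing through shifts and extension closures, these identities yield
$$j^!(\Filt S_\T[\leq -1]) \subseteq \Filt S_\Y[\leq -1], \qquad i^!(\Filt S_\T[\leq -1]) \subseteq \Filt S_\X[\leq -1].$$
For $S^\T$, I would first record the analogue of Lemma~\ref{lem:image of W} obtained from the triangles $(\diamondsuit)$ and $(\diamondsuit\diamondsuit)$, namely $j^!(P_i) = Y_i$, $i^*(P_i) \simeq M_i$, $i^!(P_i) \simeq N_i[-1]$; since $N_i \in \Filt S_\X[\leq 0]$, the same two inclusions then hold with $S_\T$ replaced by $S^\T$.

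Combining these with $B \in \T^{\leq 0}$, one sees that $j^!(B) \in \Filt S_\Y[\geq 0] \cap \Filt S_\Y[\leq -1]$, which is zero by the standard $t$-structure Hom-vanishing. Hence $B \simeq i_*(Z)$ for some $Z \in \X$ by the recollement, and then $Z \simeq i^!(B) \in \Filt S_\X[\leq -1]$ while $Z \simeq i^*(B) \in \Filt S_\X[\geq 0]$ (the latter from $B \in \T^{\leq 0}$). The same intersection argument inside $\X$ forces $Z = 0$, so $B = 0$ and $T \simeq A \in \Filt S_\T[\geq 0]$. The main obstacle is the unstated $P_i$-analogue of Lemma~\ref{lem:image of W}; once it is in place, the rest is the uniform argument above.
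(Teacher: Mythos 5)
Your proposal is correct, but it takes a genuinely different and noticeably longer route than the paper. The paper proves the hard inclusion $\T^{\leq 0}\subseteq\Filt S_\T[\geq 0]$ directly from the recollement canonical triangle $j_!j^!(T)\to T\to i_*i^*(T)\to j_!j^!(T)[1]$: since $j_!j^!(T)\in j_!(\Filt S_\Y[\geq 0])\subseteq\Filt S_\T[\geq 0]$ by Lemma~\ref{lem:prep4.1}(1) and $i_*i^*(T)\in i_*(\Filt S_\X[\geq 0])\subseteq\Filt S_\T[\geq 0]$, extension-closure of $\Filt S_\T[\geq 0]$ finishes the job in one line, and part (2) is identical using Lemma~\ref{lem:prep4.1}(2). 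You instead truncate $T$ with respect to the $t$-structure generated by $S_\T$ (resp.\ $S^\T$) and argue that the coaisle component $B$ must vanish; to get there you must first establish the auxiliary inclusions $j^!(\Filt S_\T[\leq -1])\subseteq\Filt S_\Y[\leq -1]$ and $i^!(\Filt S_\T[\leq -1])\subseteq\Filt S_\X[\leq -1]$, then use $j^!(B)=0$ to place $B$ in the essential image of $i_*$, then run a second intersection argument in $\X$. For part (2) this additionally requires the analogue of Lemma~\ref{lem:image of W} for the $P_i$ (namely $j^!(P_i)\simeq Y_i$, $i^*(P_i)\simeq M_i$, $i^!(P_i)\simeq N_i[-1]$), which you correctly state and which does follow from the triangles $(\diamondsuit)$ and $(\diamondsuit\diamondsuit)$ by the same method as Lemma~\ref{lem:image of W}. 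Both approaches are sound; the paper's is more economical because Lemma~\ref{lem:prep4.1}(1),(2) are tailored exactly to feed the recollement triangle, while your argument has the side benefit of recording how $i^*,i^!,j^!$ act on $P_i$, which the paper never writes down but implicitly uses elsewhere.
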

\begin{proof}
(1)
Since $S_\T\subset \T^{\leq 0}$  (Lemma~\ref{lem:prep4.1}), then $\Filt S_\T[\geq 0]\subseteq\T^{\leq 0}$. Conversely, let $T\in\T^{\leq 0}$, i.e. $i^*(T)\in \Filt S_\X[\geq 0]$ and $j^!(T)\in\Filt S_\Y[\geq 0]$. Consider the canonical decomposition triangle of $T$
$$j_!j^!(T)\longrightarrow T\longrightarrow i_*i^*(T)\longrightarrow j_!j^!(T)[1].$$
Since $j_!j^!(T)\in\Filt S_\T[\geq 0]$ by Lemma~\ref{lem:prep4.1} and $i_*i^*(T)\in i_*(\Filt S_\X[\geq 0])\subset \Filt S_\T[\geq 0]$, then $T\in \Filt S_\T[\geq 0]$. Hence, we have $\T^{\leq 0}\subseteq \Filt S_\T[\geq 0]$.

(2)
Since $S^\T\subset \T^{\leq 0}$  (Lemma~\ref{lem:prep4.1}), then $\Filt S^\T[\geq 0]\subseteq\T^{\leq 0}$. Conversely, let $T\in\T^{\leq 0}$. Consider the canonical triangle
$$j_!j^!(T)\longrightarrow T\longrightarrow i_*i^*(T)\longrightarrow j_!j^!(T).$$
$i_*i^*(T)\in i_*(\Filt S_\X[\geq 0])\subset \Filt S^\T[\geq 0]$. Since $j^!(T)\in\Filt S_\Y[\geq 0]$, then $j_!j^!(T)\in \Filt S^\T[\geq 0]$ by Lemma~\ref{lem:prep4.1}. Hence we have $T\in\Filt S^\T[\geq 0]$.
We finish the proof.
\end{proof}

Proposition~\ref{prop:comp $t$-structure} shows that $S_\T$ and $S^\T$ induce the same $t$-structures, by the one-to-one correspondence between algebraic $t$-structures and isomorphism classes of simple-minded collections, then $S_\T$ and $S^\T$ are isomorphic. Also, this proposition tells us the compatibility of gluing $t$-structures and gluing simple-minded collections along a recollement. The following diagram is a {framework} of our approach, where $\varphi$ is the correspondence from simple-minded collections to $t$-structures,
$$\xymatrix{
&&S_\X\ar[d]^{\varphi}\ar[ddll]_{\text{glue}}\ar[ddrr]^{\text{glue}}&&
\\
&&(\Filt S_\X[\geq 0], \Filt S_\X[\leq 0])\ar[d]^{\text{glue}}&&
\\
S_\T\ar[rr]^{\varphi}&&(\T^{\leq 0}, \T^{\geq 0})&&S^\T\ar[ll]_{\varphi}
\\
&&(\Filt S_\Y[\geq 0], \Filt S_\Y[\leq 0])\ar[u]_{\text{glue}}&&
\\
&&S_\Y\ar[u]_{\varphi}\ar[uurr]_{\text{glue}}\ar[uull]^{\text{glue}}&&
}$$

 { 
\begin{rem}
Koenig-Yang set up correspondences {\color{black}between} silting objects, co-$t$-structures, $t$-structures and simple-minded collections in \cite{KY2014}. 
Gluing co-$t$-structures is discussed in \cite{Bond2010} and gluing silting objects is studied in \cite{LVY2014}. Via Koenig-Yang's correspondences, the gluing of co-$t$-structures and silting objects is compatible by \cite{LVY2014} and the gluing of simple-minded collections and $t$-structures is compatible by Proposition~\ref{prop:comp $t$-structure}. To discuss the compatibility of these four kinds of gluing, it suffices to study the compatibility of gluing $t$-structures and co-$t$-structures. Indeed, {\color{black}Saor\'{i}n-Zvonareva \cite[Proposition 6.14]{SZ2022} showed that for finite dimensional algebras $A,B,C$, if there are the following two recollements
$$\xymatrix{\K^{b}(\projA)\ar^-{i_*}[r]&\K^{b}(\projB)\ar^-{j^!}[r]
\ar^-{i^!}@/^1.2pc/[l]\ar_-{i^*}@/_1.6pc/[l]
&\K^{b}(\projC)\ar^-{j_*}@/^1.2pc/[l]\ar_-{j_!}@/_1.6pc/[l]} ~~~~~~(*)$$
and
$$\xymatrix{\D^{b}(\smodC)\ar^-{j_*}[r]&\D^{b}(\smodB)\ar^-{i^!}[r]
\ar^-{j^\#}@/^1.2pc/[l]\ar_-{j^{!}}@/_1.6pc/[l]
&\D^{b}(\smodA)\ar^-{i_\#}@/^1.2pc/[l]\ar_-{i_{*}}@/_1.6pc/[l]}~~~~~~~~(**)$$
where the functors $j^!, j_*, i_*$ and $i^!$ of the recollement $(*)$ are the restrictions
of the corresponding functors in $(**)$. Then the gluing of co-$t$-structures in $(*)$ is compatible with the gluing of $t$-structures in $(**)$.}
\end{rem}}

\medskip
{\bf {In the rest of the article}, we will focus on the first kind of gluing technique.}

{At the end of this section}, we give an example to show that not all middle simple-minded collections are of glued type. Firstly, let's present a necessary condition for a simple-minded collection being of glued type.

\begin{lem}\label{lem:sufficient condition}
Suppose $\X$ and $\Y$ are not zero categories.
 Let $S=\{T_1, \cdots, T_n\}$ be a glued simple-minded collection of $\T$. Then there exists $T_i$ such that $j^!(T_i)=0$.
\end{lem}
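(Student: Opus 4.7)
The plan is to exploit the explicit structure of a glued simple-minded collection together with one of the vanishing identities built into the recollement. Recall that by the construction in Section~\ref{sec:gluing}, any glued simple-minded collection of $\T$ has the shape
$$S_\T=\{i_*(X_1),\ldots,i_*(X_m),W_1,\ldots,W_n\},$$
where $S_\X=\{X_1,\ldots,X_m\}$ is a simple-minded collection of $\X$ and the $W_j$'s arise from $Y_j\in S_\Y$ via Steps 1--3. So the assertion will follow as soon as I can exhibit one entry of the form $i_*(X_k)$.

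I would then observe that the hypothesis $\X\neq 0$ forces $m\geq 1$. Indeed, condition (2) in the definition of a simple-minded collection requires $S_\X$ to generate $\X$ as a thick triangulated subcategory, and the empty set generates only the zero category; hence $S_\X$ is nonempty.

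With $m\geq 1$ in hand, I take $T_i:=i_*(X_1)\in S_\T$. The recollement axiom $i^!j_*=0$ has the parallel consequence $j^!i_!=0$ mentioned in Subsection~\ref{subsec:recollement}, and since $i_*=i_!$ this gives $j^!i_*=0$. Therefore $j^!(T_i)=j^!i_*(X_1)=0$, which is the required conclusion.

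I do not anticipate any real obstacle: the statement is essentially a bookkeeping remark combining the standard shape of a glued simple-minded collection with a built-in vanishing of the recollement. The role of the extra assumption that $\Y$ is nonzero is not used in the argument itself, but guarantees the conclusion is non-vacuous (if $\Y=0$ then $j^!$ annihilates every object of $\T$, so the conclusion would be trivial).
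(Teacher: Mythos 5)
Your argument is correct and matches the paper's own (more terse) proof exactly: both observe that $\X\neq 0$ forces the block $\{i_*(X_1),\ldots,i_*(X_m)\}$ of the glued collection to be nonempty and then invoke $j^!i_*=0$. Nothing more to add.
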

\begin{proof}
Since $\X$ is non-zero, then there is a non-empty subset $S$ in which each object is the image of an object in a simple-minded collection of $\X$. Because $j^!i_*=0$. Then the statement holds.
\end{proof}

\begin{exam}\label{exam:not glued}
  Let $Q$ be a quiver
  $$1~\bullet\longrightarrow \bullet~2$$
  and $A=kQ$. $A$ is a finite-dimensional hereditary algebra and the Auslander-Reiten quiver of $\D^b(\smodA)$  is
  $$\xymatrix@!0{
  &S_1[-1]\ar[dr]
  &
  &P_1\ar[dr]
  &
  &S_2[1]\ar[dr]
  &
  &S_1[1]\ar[dr]
  &
  \\
  \cdots\ar[ur]
  &
  &S_2\ar[ur]
  &
  &S_1\ar[ur]
  &
  &P_1[1]\ar[ur]
  &
  &\cdots
  }$$

$\D^b(\smodA)$ has the following decomposition with respect to $e=e_1$
$$\xymatrix{\D^b(\smodAeAe)\ar^-{i_*=i_!}[r]&\D^b(\smodA)\ar^-{j^!=j^*}[r]
\ar^-{i^!}@/^1.2pc/[l]\ar_-{i^*}@/_1.6pc/[l]
&\D^b(\smodeAe)\ar^-{j_*}@/^1.2pc/[l]\ar_-{j_!}@/_1.6pc/[l]}.$$

By an easy check, $\{P_1[1], S_1\}$ is a simple-minded collection of $\D^b(\smodA)$, but $$j^!(P_1[1])\simeq S_1[1]~\text{and}~j^!(S_1)\simeq S_1,$$
then from Lemma~\ref{lem:sufficient condition}, it is not of glued type.
\end{exam}

\section{Partial order and mutations}\label{sec:order}
In this section, let $\T$ be a triangulated category. Assume $\T$ is a recollement of triangulated categories $\X$ and $\Y$
$$\xymatrix{\X\ar^-{i_*=i_!}[rr]&&\mathscr{T}\ar^-{j^!=j^*}[rr]
\ar^-{i^!}@/^1.2pc/[ll]\ar_-{i^*}@/_1.6pc/[ll]
&&\Y\ar^-{j_*}@/^1.2pc/[ll]\ar_-{j_!}@/_1.6pc/[ll]}.$$

{Let us recall the definitions of the partial order and the mutation of simple-minded collections.} Let $S=\{S_1, \cdots, S_m\}$ and $S'=\{S'_1, \cdots, S'_m\}$ be two simple-minded collections of $\T$. Define $S\geq S'$ if $\Hom(S'_i, S_j[s])=0, s<0, 1\leq i, j \leq m$. The relation $\geq$ is a partial order on the set of isomorphism classes of simple-minded collections of $\T$ (See \cite[Proposition 7.9]{KY2014}). When $S$ and $S'$ are isomorphic, we write $S=S'$ simply.
Furthermore, one can mutate a simple-minded collection.
Let's recall its definition. According to \cite[Definition 7.5]{KY2014},  the left mutation of $S$ in $i$ is defined as $\mu^+_i(S):=\{S'_1, \cdots, S'_m\}$, where
$$
S'_l=
\begin{cases}
S_i[1],~l=i\\
\cone(g_{li}), l\neq i
\end{cases}
$$
here, $g_{li}$ is a minimal left $\add(S_i)$-approximation of $S_l[-1]$, that is there is a triangle
$$(\heartsuit)\quad S_l[-1]\stackrel{g_{li}}\longrightarrow S_{li}\longrightarrow S'_l\longrightarrow S_l.$$
Indeed, the middle morphism is a right $\add(S_i)$-approximation. Dually, one can define the right mutation of $S$ in $i$, denoted by $\mu^-_i(S)$, by choosing the cocone of a minimal right $\add(S_i)$-approximation of $S_l[1]$ when $l\neq i$,  and choosing $S_i[-1]$ when $l=i$. From \cite[After Remark 7.7]{KY2014}, we know that if $S_i$ is rigid (i.e. $\Hom(S_i, S_i[1])=0$), then both the left and right mutations in $i$ are simple-minded collections. When $\T$ is a bounded derived category of a finite-dimensional algebra, a mutation is always a simple-minded collection without additional assumption \cite[Lemma 7.8]{KY2014}.

\subsection{Gluing partial order}
Let $S_\X=\{X_1, \cdots, X_n\}\geq S'_\X=\{X'_1, \cdots, X'_n\}$ be two simple-minded collections of $\X$ and
$S_\Y=\{Y_1, \cdots, Y_n\}\geq S'_\Y=\{Y'_1, \cdots, Y'_n\}$ be two simple-minded collections of $\Y$. Denote $S^1_\T, S^2_\T, S^3_\T, S^4_\T$ be the glued simple-minded collections of $\T$ of pairs $\{S_\X, S_\Y\}, \{S'_\X, S_\Y\}, \{S_\X, S'_\Y\}$ and $\{S'_\X, S'_\Y\}$, respectively. We display the notation above in the diagram
$$\xymatrix{
S_\X\ar[rr]\ar[drrr]& &S^1_\T\ar@{--}[dl]\ar@{--}[dr]& &S_\Y\ar[ll]\ar[dlll]
\\
& S^2_\T\ar@{--}[dr]& &S^3_\T\ar@{--}[dl]&
\\
S'_\X\ar[ur]\ar[rr]& &S^4_\T& &S'_\Y.\ar[ll]\ar[ul]
}$$

\begin{thm}\label{thm:preserve order}
Under the notation and setting above.
  \begin{enumerate}
    \item $S^1_\T\geq S^2_\T\geq S^4_\T$;
    \item $S^1_\T\geq S^3_\T\geq S^4_\T$.
  \end{enumerate}
\end{thm}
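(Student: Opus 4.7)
The plan is to reduce both order inequalities to inclusions of the aisles of the associated algebraic $t$-structures, and then read off the conclusion from the explicit formula for the glued aisle supplied by Proposition~\ref{prop:comp $t$-structure}.

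The first step is the following reformulation of the partial order: for any two simple-minded collections $S, S'$ in a Hom-finite Krull--Schmidt triangulated category, $S \geq S'$ is equivalent to the aisle inclusion $\Filt S'[\geq 0] \subseteq \Filt S[\geq 0]$. This is a direct consequence of the $t$-structure identity
\[
\Filt S[\geq 0] = \{X \in \T : \Hom(X, Y) = 0 \text{ for every } Y \in \Filt S[\leq -1]\}
\]
together with the fact that $\Filt S[\leq -1]$ is the extension closure of $\{S_j[s] : s < 0\}$: the Hom-vanishing in the paper's definition of $\geq$ is exactly the statement $S' \subseteq \Filt S[\geq 0]$, and since the right-hand side is closed under extensions and under the shift $[1]$, this is in turn equivalent to $\Filt S'[\geq 0] \subseteq \Filt S[\geq 0]$.

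The second step uses Proposition~\ref{prop:comp $t$-structure}: each glued simple-minded collection $S^k_\T$ produces the same aisle as the corresponding glued $t$-structure, so
\[
\Filt S^k_\T[\geq 0] = \{T \in \T : i^*(T) \in \Filt S^{(k)}_\X[\geq 0],\ j^!(T) \in \Filt S^{(k)}_\Y[\geq 0]\},
\]
where $(S^{(k)}_\X, S^{(k)}_\Y)$ denotes the pair from which $S^k_\T$ is glued. Feeding in the aisle inclusions $\Filt S'_\X[\geq 0] \subseteq \Filt S_\X[\geq 0]$ and $\Filt S'_\Y[\geq 0] \subseteq \Filt S_\Y[\geq 0]$ delivered by the hypotheses through the first step, this formula produces at once the chains
\[
\Filt S^4_\T[\geq 0] \subseteq \Filt S^2_\T[\geq 0] \subseteq \Filt S^1_\T[\geq 0], \qquad \Filt S^4_\T[\geq 0] \subseteq \Filt S^3_\T[\geq 0] \subseteq \Filt S^1_\T[\geq 0],
\]
which by the first step translate back into the claimed inequalities $S^1_\T \geq S^2_\T \geq S^4_\T$ and $S^1_\T \geq S^3_\T \geq S^4_\T$.

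I do not anticipate any substantive obstacle; the calculation is a formal unwinding of definitions once the reformulation is in place. The one point that requires a little care is fixing the direction of the aisle inclusion, namely that a larger simple-minded collection in the partial order corresponds to a larger aisle, so that the aisle inclusions produced by the hypotheses flow in the direction needed to make the coordinate-wise definition of the glued aisle monotone in both inputs.
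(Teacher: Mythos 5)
Your proposal is correct, and it takes a genuinely different route from the paper's. The paper proves Theorem~\ref{thm:preserve order} by a direct case-by-case Hom-vanishing calculation: for each pair of objects in the glued collections, it applies $\Hom(-,-)$ to the defining triangles $(\#)$ and $(\#\#)$, invokes Lemma~\ref{lem:image of W} and adjunctions to kill the outer terms, and concludes the required vanishing. Your argument instead appeals to the well-known equivalence $S\geq S' \iff \Filt S'[\geq 0]\subseteq \Filt S[\geq 0]$ (a consequence of $\Filt S[\geq 0]={}^{\perp}(\Filt S[\leq -1])$ and of $\Filt S[\geq 0]$ being the smallest subcategory containing $S$ closed under extensions and $[1]$), and then reads the chain of aisle inclusions off the explicit coordinate-wise description of the glued aisle supplied by Proposition~\ref{prop:comp $t$-structure}. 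This is cleaner and more conceptual: monotonicity of the gluing becomes a one-line consequence of the formula $\Filt S^k_\T[\geq 0]=\{T: i^*(T)\in\Filt S^{(k)}_\X[\geq 0],\ j^!(T)\in\Filt S^{(k)}_\Y[\geq 0]\}$, whereas the paper's computation is self-contained at the level of morphism spaces and does not rely on the aisle reformulation (which the paper itself flags as ``well-known'' but deliberately recasts in simple-minded-collection terms later in the section). The one piece of care you correctly flag — that a larger collection in the order corresponds to a larger aisle, so that the coordinate-wise condition is monotone in the right direction — is handled correctly in your write-up.
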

\begin{proof}
For convenience, set
{ 
$$S^l_\T= \begin{cases}
      \{i_*(X_1), \cdots, i_*(X_m), W^l_1, W^l_2, \cdots, W^l_n\},& ~l=1,3\\
      \{i_*(X'_1), \cdots, i_*(X'_m), W^l_1, W^l_2, \cdots, W^l_n\},& ~l=2,4
  \end{cases}$$}

(1)
Firstly, we show $S^1_\T\geq S^2_\T$. {Thanks to Lemma~\ref{lem:image of W} and the properties of $t$-structures,} this is done by the statements below.
\begin{itemize}
  \item [(i)] $\Hom(i_*(X'_i), i_*(X_j)[<0])\simeq \Hom(X'_i, X_j[<0])=0$.
  \item [(ii)] $\Hom(i_*(X'_i), W^1_j[<0])\simeq \Hom(X'_i, {i^!}(W^1_j[<0]))\simeq \Hom(X'_i, V^1_j[<0])=0$.
  \item [(iii)] $\Hom(W^2_i, i_*(X_j)[<0])\simeq \Hom(i^*(W^2_i), X_j[<0])\simeq \Hom(U^2_i[1], X_j[<0])=0$.
  \item [(iv)] By applying $\Hom(-, W^1_j[<0])$ { to} the triangle $(\#)$ w.r.t $W^2_i$, then there is an exact sequence
$$\Hom(i_*(U^2_i){[1]}, W^1_j[<0])\longrightarrow \Hom(W^2_i, W^1_j[<0])\longrightarrow \Hom(j_!(Y_i), W^1_j[<0])$$
in which
{ $$\Hom(i_*(U^2_i){[1]}, W^1_j[<0])\simeq \Hom(U^2_i[1], i^!(W^1_j)[<0])\simeq \Hom(U^2_i[1], V^1_j)=0$$}
and
$$\Hom(j_!(Y_i), W^1_j[<0])\simeq \Hom(Y_i, j^!(W^1_j)[<0])\simeq \Hom(Y_i, Y_j[<0])=0$$
{The last equality holds because $S_{\Y}$ is a simple-minded collection.} Then we have $\Hom(W^2_i, W^1_j[<0])=0$.
\end{itemize}

Next, we show $S^2_\T\geq S^4_\T$. {Since the first $m$ terms of the two simple-minded collections are the same, and by the definitions of simple-minded collection and partial order,} it suffices to prove $\Hom(W^4_i, W^2_j[<0])=0$. Let $s< 0$. By applying $\Hom(-, W^2_i[s])$ { to} the triangle $(\#)$ w.r.t. $W^4_j$,
there is an exact sequence
$$\Hom(i_*(U'^4_j[1]), W^2_i[s])\longrightarrow \Hom(W^4_j, W^2_i[s])\longrightarrow\Hom(j_!(Y'_j), W^2_i[s]).$$
Since
$$\Hom(i_*(U'^4_j[1], W^2_i[s])\simeq \Hom(U'^4_j[1], i^!(W^2_i[s]))\simeq \Hom(U'^4_j[1], V'^2_i[s])=0$$
and
$$\Hom(j_!(Y'_j), W^2_i[s])\simeq \Hom(Y'_j, j^!(W^2_i)[s])\simeq \Hom(Y'_j, Y_i[s])=0,$$
{ the last equality holds since $S_{\Y}\geq S_{\Y'}$.} Then we have $\Hom(W^4_j, W^2_i[s])=0$.

(2) It is similar to (1).
\end{proof}

\subsection{Partial order of mutations}

{The partial order and the reformulation in terms of aisles of $t$-structures are well-known. In this section, we present it in terms of simple-minded collections. For the convenience of the reader, we give detailed proofs.}

\begin{prop}\label{prop:mutation}
Let $S=\{X_1, \cdots, X_m\}$ be a simple-minded collection of $\X$. Suppose that $X_i$ is rigid. Then
$$S[-1]\geq \mu^-_i(S)\geq S\geq \mu^+_i(S)\geq S[1].$$
\end{prop}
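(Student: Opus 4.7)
The plan is to verify the four distinct inequalities
$$S[-1] \geq \mu^-_i(S), \quad \mu^-_i(S) \geq S, \quad S \geq \mu^+_i(S), \quad \mu^+_i(S) \geq S[1]$$
one by one by direct $\Hom$-computations on the triangles defining the mutated objects. Recall that $\Sigma \geq \Sigma'$ means $\Hom(\Sigma'_a, \Sigma_b[s]) = 0$ for all indices $a,b$ and all $s<0$. The rigidity hypothesis on $X_i$ is needed only to invoke the result cited just above the proposition statement, ensuring that $\mu^{\pm}_i(S)$ are themselves simple-minded collections so that the partial order is defined on them.

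Writing $\mu^+_i(S) = \{S^+_1,\ldots,S^+_m\}$ with $S^+_i = X_i[1]$, a rotation of the defining triangle $(\heartsuit)$ exhibits each $S^+_l$ for $l \neq i$ as
$$X_{li} \longrightarrow S^+_l \longrightarrow X_l \longrightarrow X_{li}[1], \qquad X_{li} \in \add(X_i).$$
Dually, writing $\mu^-_i(S) = \{S^-_1,\ldots,S^-_m\}$ with $S^-_i = X_i[-1]$, one has for $l \neq i$ a triangle
$$X_l \longrightarrow S^-_l \longrightarrow X^-_{li} \longrightarrow X_l[1], \qquad X^-_{li} \in \add(X_i).$$
These are the only inputs beyond the SMC axiom $\Hom(X_k, X_l[n])=0$ for $n<0$.

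For $S \geq \mu^+_i(S)$ I would apply $\Hom(-, X_b[s])$ to the first triangle; both flanking terms $\Hom(X_l, X_b[s])$ and $\Hom(X_{li}, X_b[s])$ vanish for $s<0$ by the SMC property of $S$, hence $\Hom(S^+_l, X_b[s])=0$, and the case $l=i$ is just $\Hom(X_i[1], X_b[s]) = \Hom(X_i, X_b[s-1]) = 0$. For $\mu^+_i(S) \geq S[1]$ what has to be shown is $\Hom(X_a, S^+_b[s-1])=0$ for $s<0$; applying $\Hom(X_a, -)$ to the shifted triangle, both adjacent $\Hom$'s vanish since their shifts $s-1$ and $s$ are negative, and again $b=i$ is immediate. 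The two inequalities $\mu^-_i(S) \geq S$ and $S[-1] \geq \mu^-_i(S)$ are handled by the symmetric argument on the dual triangle.

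No step presents a real obstacle; the work is purely bookkeeping. The only things to keep straight are the choice between applying $\Hom(-,\text{--})$ and $\Hom(\text{--},-)$ for each of the four sub-inequalities, and separating the case $l=i$ (where the mutated term is merely a shift of $X_i$ and the SMC axiom is applied directly) from $l \neq i$ (where the triangle is used).
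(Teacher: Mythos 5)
Your proof is correct and follows essentially the same route as the paper's: reduce each of the four inequalities to the vanishing of a single $\Hom$-group, handle $l=i$ directly via the shift identification $S^{\pm}_i = X_i[\pm 1]$, and for $l\neq i$ apply $\Hom(-,X_b[s])$ or $\Hom(X_a,-[s])$ to the defining triangle $(\heartsuit)$ and kill both flanking terms using the simple-minded axiom $\Hom(X_k, X_l[n])=0$ for $n<0$. The paper similarly writes out two of the four inequalities and leaves the remaining two as dual, and likewise invokes rigidity only to guarantee that $\mu^{\pm}_i(S)$ are simple-minded collections so that $\geq$ makes sense.
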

\begin{proof}
  Set $\mu^+_i(S)=\{X'_1, \cdots, X'_m\}$ and $\mu^-_i(S)=\{X''_1, \cdots, X''_m\}$. It suffices to prove that for $1\leq l, j\leq  m, s< 0$, the following four statements hold.
\begin{enumerate}
  \item $\Hom(X'_l, X_j[s])=0$;
  \item $\Hom(X_j, X''_l[s])=0$;
  \item $\Hom(X_j[1], X'_l[s])=0$;
  \item $\Hom(X''_l, X_j[s-1])=0$.
\end{enumerate}
We only prove (1) and (3). (2) and (4) are left to the reader.

(1) Let $s< 0$. When $l=i$, then
$$\Hom(X'_i, X_j[s])=\Hom(X_i[1], X_j[s])=0.$$
When $l\neq i$, by applying $\Hom(-, X_j[s])$ { to} the triangle $(\heartsuit)$, then there is an exact sequence
$$\Hom(X_l, X_j[s])\longrightarrow\Hom(X'_l, X_j[s])\longrightarrow \Hom(X_{li}, X_j[s]).$$
Since the left and right terms are $0$, then so is the middle one. Thus (1) holds.

(3)  Let $s< 0$. When $l=i$, { $\Hom(X_j[1], X'_i[s])=\Hom(X_j[1], X_i[s+1])=\Hom(X_j, X_i[s])=0.$}
When $l\neq i$, by applying $\Hom(X_j[1], -[s])$ { to} the triangle $(\heartsuit)$, there is an exact sequence
$$\Hom(X_j[1], X_{li}[s])\longrightarrow\Hom(X_j[1], X'_l[s])\longrightarrow \Hom(X_j[1], X_l[s]).$$
Since the left and right {terms} are $0$, then so is the middle one. Thus (3) holds.
\end{proof}

From the proposition above, we have the following corollary.

\begin{cor}\label{cor:compare torsion}
The following holds.
  $$\Filt S[\geq 1]\subseteq \Filt \mu^+_i(S)[\geq 0]\subseteq \Filt S[\geq 0]\subseteq \Filt \mu^-_i(S)[\geq 0]\subseteq \Filt S[\geq -1].$$
\end{cor}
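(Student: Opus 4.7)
The plan is to deduce Corollary~\ref{cor:compare torsion} directly from Proposition~\ref{prop:mutation} by translating the partial order $\geq$ on simple-minded collections into inclusions of the aisles of the associated algebraic $t$-structures.

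The technical heart of the argument is a comparison lemma: if $S$ and $S'$ are simple-minded collections of $\T$ with $S \geq S'$, then $\Filt S'[\geq 0] \subseteq \Filt S[\geq 0]$. To establish it, I would first observe that for the $t$-structure $(\Filt S[\geq 0], \Filt S[\leq 0])$ one has $\T^{\geq 1} = \Filt S[\leq 0][-1] = \Filt S[\leq -1]$, which is the extension closure of $\{S_j[s] \mid 1\leq j\leq m,\ s<0\}$. Combining the standard $t$-structure identity $\T^{\leq 0}={}^{\perp}\T^{\geq 1}$ with the fact that $\Hom(T, -)$ vanishes on an extension closure as soon as it vanishes on the generators, the aisle admits the cohomological description
$$\Filt S[\geq 0]=\{T\in\T \mid \Hom(T, S_j[s])=0 \text{ for all } 1\leq j\leq m \text{ and all } s<0\}.$$
Since $S\geq S'$ means precisely that $\Hom(S'_i, S_j[s])=0$ for every $s<0$, each $S'_i$ lies in $\Filt S[\geq 0]$; as the latter is closed under $[1]$ and under extensions, it contains $\Filt S'[\geq 0]$.

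With the lemma in hand, I apply it to the four inequalities of Proposition~\ref{prop:mutation},
$$S[-1]\geq \mu^-_i(S)\geq S\geq \mu^+_i(S)\geq S[1],$$
together with the elementary shift identity $\Filt S[k][\geq 0]=\Filt S[\geq k]$. This yields, in order, the inclusions $\Filt \mu^-_i(S)[\geq 0]\subseteq \Filt S[\geq -1]$, $\Filt S[\geq 0]\subseteq \Filt \mu^-_i(S)[\geq 0]$, $\Filt \mu^+_i(S)[\geq 0]\subseteq \Filt S[\geq 0]$, and $\Filt S[\geq 1]\subseteq \Filt \mu^+_i(S)[\geq 0]$. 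Concatenating these four inclusions gives exactly the required chain.

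The only delicate point is the comparison lemma, and within it the cohomological characterization of the aisle: one has to justify that testing Hom-vanishing against the finitely many generators $S_j[s]$ is equivalent to lying in the aisle, which rests on the $t$-structure identity $\T^{\leq 0}={}^{\perp}\T^{\geq 1}$ together with the generation of $\T^{\geq 1}$ by negative shifts of $S$. Once this is granted, the rest of the corollary is a direct substitution, and I would expect the proof to either isolate this lemma or fold it transparently into the calculation.
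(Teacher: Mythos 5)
Your proof is correct and takes the approach the paper clearly intends: the corollary is stated as an immediate consequence of Proposition~\ref{prop:mutation}, and you supply the standard translation from the partial order $S\geq S'$ to the aisle inclusion $\Filt S'[\geq 0]\subseteq \Filt S[\geq 0]$ via the cohomological description of the aisle, followed by the shift identity $\Filt S[k][\geq 0]=\Filt S[\geq k]$. The paper itself gives no proof, so your filling-in of this routine argument matches what a reader is expected to do.
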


\begin{prop}
  Let $S=\{X_1, \cdots, X_m\}\geq S'=\{X'_1, \cdots, X'_m\}$ be two simple-minded collections of $\X$. Assume that $X_i, X'_j$ are rigid.
  \begin{enumerate}
    \item If $\Hom(X'_l, X_i)=0$ for $1\leq l\leq m$, then $\mu^+_i(S)\geq S'\geq \mu^+_j(S')$;
    \item If $\Hom(X'_j, X_l)=0$ for $1\leq l\leq m$, then $\mu^-_i(S)\geq S\geq \mu^-_j(S')$.
  \end{enumerate}
\end{prop}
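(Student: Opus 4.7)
The two outer inequalities $S'\geq \mu^+_j(S')$ in (1) and $\mu^-_i(S)\geq S$ in (2) are instances of Proposition~\ref{prop:mutation} applied inside $\X$, and require none of the new hypotheses. What remains are the ``middle'' inequalities $\mu^+_i(S)\geq S'$ in (1) and $S\geq \mu^-_j(S')$ in (2); these have parallel structure, so I describe (1) in detail and (2) more briefly.

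For (1), write $\mu^+_i(S)=\{Z_1,\dots,Z_m\}$ with $Z_i=X_i[1]$ and, for $l\neq i$, $Z_l$ defined by the mutation triangle
$$X_l[-1]\xrightarrow{g_{li}} X_{li}\longrightarrow Z_l\longrightarrow X_l,\qquad X_{li}\in\add(X_i).$$
The key move is to rotate this once to the equivalent form
$$X_{li}\longrightarrow Z_l\longrightarrow X_l\longrightarrow X_{li}[1],$$
so that $Z_l$ is presented as an extension of $X_l$ by $X_{li}$, built only from objects of $S$ in non-negative shifts. To verify $\mu^+_i(S)\geq S'$, I need $\Hom(X'_k,Z_l[s])=0$ for every $k,l$ and every $s<0$. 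For $l\neq i$, applying $\Hom(X'_k,-[s])$ to the rotated triangle yields the three-term exact sequence
$$\Hom(X'_k,X_{li}[s])\longrightarrow \Hom(X'_k,Z_l[s])\longrightarrow \Hom(X'_k,X_l[s]),$$
whose outer terms both vanish by $S\geq S'$ (using $X_{li}\in\add(X_i)$). For $l=i$, $\Hom(X'_k,Z_i[s])=\Hom(X'_k,X_i[s+1])$ is zero when $s+1<0$ by $S\geq S'$ and zero at $s=-1$ by the hypothesis $\Hom(X'_k,X_i)=0$.

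For (2), let $\mu^-_j(S')=\{Z'_1,\dots,Z'_m\}$ with $Z'_j=X'_j[-1]$ and, for $l\neq j$, use the rotated cocone triangle
$$X'_l\longrightarrow Z'_l\longrightarrow X'_{lj}\longrightarrow X'_l[1],\qquad X'_{lj}\in\add(X'_j).$$
Applying $\Hom(-,X_k[s])$ gives the analogous three-term sequence
$$\Hom(X'_{lj},X_k[s])\longrightarrow \Hom(Z'_l,X_k[s])\longrightarrow \Hom(X'_l,X_k[s]),$$
both of whose outer terms vanish by $S\geq S'$; the case $l=j$ reduces to $\Hom(X'_j,X_k[s+1])$, handled by $S\geq S'$ for $s<-1$ and by the hypothesis $\Hom(X'_j,X_k)=0$ at $s=-1$.

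The one subtle point is the choice of rotation of the mutation triangle. If one keeps the original form $X_l[-1]\to X_{li}\to Z_l\to X_l$, the long exact sequence for $\Hom(X'_k,-[s])$ forces control of $\Hom(X'_k,X_l[s+1])$ at $s=-1$, namely $\Hom(X'_k,X_l)$ for \emph{every} $l$, which is strictly stronger than the assumption provides. Rotating once moves the contribution of $X_l$ to a non-critical shift where $S\geq S'$ already suffices, and confines the use of the extra hypothesis to the single index being mutated.
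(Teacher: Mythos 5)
Your proof is correct and follows the same strategy as the paper's: the outer inequalities come from Proposition~\ref{prop:mutation}, and the middle inequalities are obtained by applying $\Hom(X'_k,-[s])$ (resp. $\Hom(-,X_k[s])$) to the mutation triangle and reading off the three-term piece around $\Hom(X'_k,Z_l[s])$, with the extra hypothesis used only at the mutated index $l=i$ (resp. $l=j$) at shift $s=-1$. One clarification on your ``subtle point'': rotating a triangle does not change the long exact sequence it induces, so there is nothing to gain or lose by choosing a rotation. From the unrotated triangle $X_l[-1]\to X_{li}\to Z_l\to X_l$, applying $\Hom(X'_k,-[s])$ already yields the exact segment $(X'_k,X_{li}[s])\to(X'_k,Z_l[s])\to(X'_k,X_l[s])$ (which is exactly what the paper writes), and these two outer terms are all that is needed to kill the middle one; no term of the form $\Hom(X'_k,X_l[s+1])$ is adjacent to the term of interest. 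So the rotation step is cosmetic, not essential, though your computation is still correct.
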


\begin{proof}
 We prove (1). (2) is dual. Let $s<0$. Because of Proposition~\ref{prop:mutation}, it suffices to check the equations
 \begin{itemize}
   \item [(i)] $\Hom(X'_j, X_i[1][s])=0$;
   \item [(ii)] $\Hom(X'_j, \cone(g_{ri})[s])=0, r\neq i$;
 \end{itemize}

 (i) { It is direct from the assumptions.}
 (ii)
 For $r\neq i$, by applying $\Hom(X'_j, -[s])$ { to} the triangle {$(\heartsuit)$}
 $$X_r[-1]\stackrel{g_{ri}}\longrightarrow X_{ri}\longrightarrow\cone(g_{ri})\longrightarrow X_r,$$
 then there is an exact sequence
 $$ (X'_j, X_{ri}[s])\longrightarrow (X'_j, \cone(g_{ri})[s])\longrightarrow (X'_j, X_r[s])$$
 in which the left and right terms are 0, then so is the middle one.
\end{proof}

\subsection{Gluing mutations}
For convenience, in this subsection, we fix the {notation}
\begin{align*}
  S_\X & =\{X_1, \cdots, X_m\} \\
  S_\Y & =\{Y_1, \cdots, Y_n\} \\
  S_\T & =\{i_*(X_1), \cdots, i_*(X_m), W_{m+1}, \cdots, W_{m+n}\}
\end{align*}
where $S_\T$ is the glued simple-minded collection of $S_\X$ and $S_\Y$ along the recollement.

\begin{lem}\label{lem: rigidity}
The following hold.
\begin{enumerate}
  \item If $X_i$ is rigid, then so is $i_*(X_i)$;
  \item If $Y_i$ is rigid, then so are $W_i$ and $P_i$.
\end{enumerate}
\end{lem}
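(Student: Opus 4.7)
Part (1) is essentially immediate: since $i_*$ is a fully faithful triangle functor, it commutes with $[1]$ and preserves Hom-spaces, so $\Hom_\T(i_*(X_i), i_*(X_i)[1]) \simeq \Hom_\X(X_i, X_i[1]) = 0$.

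For $W_i$ in part (2), the strategy is to apply $\Hom_\T(-, W_i[1])$ to the defining triangle $(\#)$
$$i_*(U_i)\longrightarrow j_!(Y_i)\longrightarrow W_i\longrightarrow i_*(U_i)[1],$$
reducing the vanishing of $\Hom_\T(W_i, W_i[1])$ to the three Hom-groups $\Hom_\T(j_!(Y_i), W_i[1])$ and $\Hom_\T(i_*(U_i)[k], W_i[1])$ for $k = 0, 1$. Each of these is then attacked by applying the appropriate Hom functor to the companion triangle $(\#\#)$
$$i_*(V_i)\longrightarrow W_i\longrightarrow j_*(Y_i)\longrightarrow i_*(V_i)[1].$$
The outer terms that arise vanish for two reasons: the recollement identities $j^!i_* = 0$ and $j^*i_* = 0$ (via the $(j_!,j^!)$- and $(j^*,j_*)$-adjunctions) kill the mixed $i_*$--$j_*$ and $i_*$--$j_!$ Hom-spaces, while the $t$-structure vanishing $\Hom_\X(\Filt S_\X[\geq 0],\Filt S_\X[\leq -1]) = 0$ kills $\Hom_\T(i_*(U_i)[k], i_*(V_i)[1])$ (the shift $k \in \{0,1\}$ is harmless since $U_i[k]\in \Filt S_\X[\geq 0]$ and $V_i[1]\in \Filt S_\X[\leq -2]$). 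The only term where something interesting must happen, $\Hom_\T(j_!(Y_i), j_*(Y_i)[1])$, is computed via the $(j_!,j^!)$-adjunction and full-faithfulness of $j_*$: it collapses to $\Hom_\Y(Y_i, Y_i[1]) = 0$, where rigidity of $Y_i$ is used exactly once.

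For $P_i$, I would avoid repeating the analysis and instead invoke the duality argument already deployed in Section~\ref{sec:gluing}: the collection $S^\T$ is obtained from $S_\X$ and $S_\Y$ by applying the $W$-recipe to the opposite recollement, and $Y_i$ remains rigid in $\Y^{\op}$. Hence the argument above, carried out in $\T^{\op}$, shows that $P_i$ is rigid in $\T^{\op}$, which is the same as being rigid in $\T$. The main obstacle is purely bookkeeping: correctly tracking the $t$-structure degrees in $\Filt S_\X$ after all shifts, and remembering which of the four mixed vanishings $j^*i_* = i^*j_! = j^!i_* = i^!j_* = 0$ is being invoked in each long exact sequence. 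There is no conceptual difficulty once the defining triangles $(\#)$, $(\#\#)$ and their duals $(\diamondsuit)$, $(\diamondsuit\diamondsuit)$ are in place.
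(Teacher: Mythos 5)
Your plan matches the paper's argument in all essentials: part (1) is full faithfulness, part (2) for $W_i$ is the $3\times 3$ web of Hom-groups obtained from $(\#)$ and $(\#\#)$ with the three vanishings coming from $j^!i_*=j^*i_*=0$, the $t$-structure orthogonality, and (once) the rigidity of $Y_i$; the paper displays this as a commutative diagram with exact rows and columns, while you run the same long exact sequences in the other order, which is cosmetic. For $P_i$ the paper says ``by a similar proof,'' whereas you invoke the opposite-recollement construction of $S^\T$ from Section~\ref{sec:gluing} and observe that rigidity is self-dual; this is a legitimate and slightly cleaner alternative.

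One bookkeeping slip should be fixed, however. You claim $V_i[1]\in\Filt S_\X[\leq -2]$, but in the paper's conventions $\Filt S[\leq b][1]=\Filt S[\leq b+1]$, so $V_i[1]\in\Filt S_\X[\leq 0]$. Consequently $\Hom(i_*(U_i),i_*(V_i)[1])\simeq\Hom(U_i,V_i[1])$ is \emph{not} killed by the $t$-structure axiom, so the $k=0$ vanishing you list is not justified. This does not sink the argument: in the long exact sequence obtained from applying $\Hom(-,W_i[1])$ to $(\#)$, only the vanishing of $\Hom(i_*(U_i)[1],W_i[1])$ (the $k=1$ term, sitting to the left of $\Hom(W_i,W_i[1])$) together with $\Hom(j_!(Y_i),W_i[1])=0$ is needed, and for $k=1$ one has $\Hom(i_*(U_i)[1],i_*(V_i)[1])\simeq\Hom(U_i,V_i)=0$ as required. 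Simply drop the $k=0$ term from the plan and correct the degree of $V_i[1]$.
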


\begin{proof}
  The first statement is direct.
By combining the triangle $(\#)$ and the shift of the triangle $(\#\#)$ (Section~\ref{sec:gluing}), there is a diagram with exact rows and columns
$$\xymatrix{
(i_*(U_i)[1], i_*(V_i)[1])\ar[d]&  & (i_*(U_i)[1], j_*(Y_i)[1])\ar[d]
\\
(W_i, i_*(V_i)[1])\ar[r]\ar[d]&(W_i, W_i[1])\ar[r] & (W_i, j_*(Y_i)[1])\ar[d]
\\
(j_!(Y_i), i_*(V_i)[1])& &(j_!(Y_i), j_*(Y_i)[1]).
}$$
{Using the adjunctions and condition (3) of a recollement,} we have the left corners and the {upper right} corner are 0. Since
$$\Hom(j_!(Y_i), j_*(Y_i)[1])\simeq\Hom(j^!j_!(Y_i), Y_i[1])\simeq\Hom(Y_i, Y_i[1])=0,$$
{the lower right corner is 0}. Thus we have $\Hom(W_i, i_*(V_i)[1])=\Hom(W_i, j_*(Y_i)[1])=0$. This implies the center {term} $\Hom(W_i, W_i[1])=0$. By a similar proof, one can prove $P_i$ is rigid.
\end{proof}

Let $i\in\{1, \cdots, m\}$ and assume $X_i$ is rigid. Denote $S^+_l$ (resp., $S^-_l$) the simple-minded collection of $\T$ by gluing $\mu^+_i(S_\X)$ (resp., $\mu^-_i(S_\X)$) and $S_\Y$. Let $j\in\{1, \cdots, n\}$ and assume $Y_j$ is rigid. Denote $S^+_r$ (resp., $S^-_r$) the simple-minded collection of $\T$ by gluing $S_\X$ and $\mu^+_j(S_\Y)$ (resp., $\mu^-_j(S_\Y))$, here the letters { ``l" and``r"} denote the sides { ``left" and ``right".}

Based on the lemma above, under the assumption of $X_i$ or $Y_j$ being rigid, we can also do left and right mutations of the glued simple-minded collections $S_\T$ in $i$ and $m+j$. We exhibit the notation and operations in the diagrams
$$\xymatrix{
S_\X\ar[r]\ar[d]^{\mu^+_i(\mu^-_i)}
&S_\T\ar@{-->}[d]^{\mu^+_i(\mu^-_i)}
&S_\Y\ar[l]\ar@{=}[d]^{\quad \text{and}}
&S_\X\ar[r]\ar@{=}[d]
&S_\T\ar@{-->}[d]^{\mu^+_{m+j}(\mu^-_{m+j})}
&S_\Y\ar[l]\ar[d]^{\mu^+_j(\mu^-_j)}
\\
S^+_\X(S^-_\X)\ar[r]
&S^+_l(S^-_l)
&S_\Y\ar[l]
&S_\X\ar[r]
&S^+_r(S^-_r)
&S^+_\Y(S^-_\Y).\ar[l]
}$$

\begin{lem}\label{lem:mutation order}
  $S^-_*\geq S_\T\geq S^+_*$, where $*\in\{l, r\}$.
\end{lem}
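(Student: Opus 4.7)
The plan is to deduce this lemma almost immediately from two results already proved in the paper: Proposition~\ref{prop:mutation}, which gives the order relation between a simple-minded collection and its left/right mutation, and Theorem~\ref{thm:preserve order}, which says that the gluing construction preserves the partial order on simple-minded collections on each side of the recollement. The main observation is that $S^\pm_l$ and $S^\pm_r$ are, by construction, precisely the gluings of mutations of one of the side simple-minded collections with the (unchanged) simple-minded collection on the other side. So there is no need to touch the internal triangles $(\#)$ and $(\#\#)$ for the objects $W_i$; the entire argument will be carried out by chaining inequalities.

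For the case $*=l$, I would first invoke the rigidity hypothesis on $X_i$ (implicit in the definition of $S^\pm_l$, since otherwise the mutation $\mu^\pm_i(S_\X)$ need not be a simple-minded collection). Proposition~\ref{prop:mutation} applied in $\X$ then gives
\[
\mu^-_i(S_\X)\geq S_\X\geq \mu^+_i(S_\X)
\]
as simple-minded collections in $\X$. Now apply Theorem~\ref{thm:preserve order} with the $\Y$-side held fixed at $S_\Y$: gluing is monotone in its first argument, so
\[
S^-_l=\langle \mu^-_i(S_\X),S_\Y\rangle\geq \langle S_\X,S_\Y\rangle=S_\T\geq \langle \mu^+_i(S_\X),S_\Y\rangle=S^+_l,
\]
which is the desired chain.

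The case $*=r$ is entirely dual: the rigidity assumption on $Y_j$ (again implicit in the definition of $S^\pm_r$) allows Proposition~\ref{prop:mutation} applied in $\Y$ to yield $\mu^-_j(S_\Y)\geq S_\Y\geq \mu^+_j(S_\Y)$, and a second application of Theorem~\ref{thm:preserve order}, this time with the $\X$-side held fixed at $S_\X$, produces $S^-_r\geq S_\T\geq S^+_r$. Since neither step involves any genuine obstacle beyond verifying that Theorem~\ref{thm:preserve order} applies to the case where only one of the two simple-minded collections is changed, the whole proof is essentially a two-line reduction. The one thing to emphasize in the write-up is that the intermediate collections in the statement of Theorem~\ref{thm:preserve order} (namely $S^2_\T$ and $S^3_\T$) become equal to $S_\T$ when the second pair of side collections agrees with the first, so the two halves of that theorem specialize to exactly the two halves of the chain being proved here.
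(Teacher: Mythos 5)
Your proposal is correct and takes essentially the same route as the paper, which simply states the lemma is ``direct from Theorem~\ref{thm:preserve order}''; you have usefully spelled out the implicit intermediate step that Proposition~\ref{prop:mutation} is needed first to produce the chain $\mu^-_i(S_\X)\geq S_\X\geq\mu^+_i(S_\X)$ (and dually in $\Y$) before monotonicity of gluing can be applied.
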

\begin{proof}
  It is direct from Theorem~\ref{thm:preserve order}.
\end{proof}

\begin{lem}\label{lem:first m-items}
The following hold.
\begin{enumerate}
  \item The first $m$ { terms} of $S^+_l$ and $\mu^+_i(S_\T)$, $S^-_l$ and $\mu^-_i(S_\T)$ are the same (up to an isomorphism);
  \item The first $m$ { terms} of $S^+_r$ and $\mu^+_{m+j}(S_\T)$ are the same if and only if
      $$\Hom(i_*(X_t), W_j[1])=0, 1\leq t\leq m;$$
  \item The first $m$ { terms} of $S^-_r$ and $\mu^-_{m+j}(S_\T)$ are the same if and only if
      $$\Hom(W_j, i_*(X_t)[1])=0, 1\leq t\leq m.$$
\end{enumerate}
\end{lem}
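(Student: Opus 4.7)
My plan is to compare, position by position, the first $m$ entries of the two simple-minded collections appearing in each item. In every case the mutated entry is defined as the cone (or cocone) of a minimal one-sided $\add(-)$-approximation, so the lemma reduces to identifying, or producing, these approximations.

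For (1), I will first observe that the first $m$ entries of $S^+_l$ are, by construction, $i_*$ applied to $\mu^+_i(S_\X)$, while those of $\mu^+_i(S_\T)$ arise by mutating $S_\T$ at the $i$-th term $i_*(X_i)$. Using the full faithfulness of $i_*$ through the identification $\Hom_\T(i_*(-), i_*(X_i)) \simeq \Hom_\X(-, X_i)$, I can transport the minimal left $\add(X_i)$-approximation $g_{li} \colon X_l[-1] \to X_{li}$ in $\X$ to a minimal left $\add(i_*(X_i))$-approximation $i_*(g_{li})$ of $i_*(X_l)[-1]$ in $\T$; triangle exactness of $i_*$ then matches the two cones, and the $\mu^-_i$ case is entirely dual.

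For (2), the first $m$ entries of $S^+_r$ are unchanged as $i_*(X_l)$, while those of $\mu^+_{m+j}(S_\T)$ are $\cone(h_l)$ for $h_l \colon i_*(X_l)[-1] \to N_l$ a minimal left $\add(W_j)$-approximation. So I need to prove that $\cone(h_l) \simeq i_*(X_l)$ for every $l$ if and only if $\Hom(i_*(X_l), W_j[1]) = 0$ for every $l$. The forward direction is immediate: the stated vanishing makes the zero morphism $i_*(X_l)[-1] \to 0$ a (trivially minimal) left $\add(W_j)$-approximation, whose cone is $i_*(X_l)$.

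The main obstacle is the converse in (2). My plan is to consider the rotated defining triangle $i_*(X_l)[-1] \xr{h_l} N_l \to i_*(X_l) \xr{\delta} i_*(X_l)$ and exploit that $i_*(X_l)$ is a term of the simple-minded collection $S_\T$, hence $\End_\T(i_*(X_l)) \simeq K$, so $\delta$ is either an isomorphism or zero. If $\delta$ is an isomorphism, a short diagrammatic argument forces $N_l = 0$ and hence $h_l = 0$; since $h_l$ is a left approximation, this yields $\Hom(i_*(X_l)[-1], W_j) = 0$ as desired. If $\delta = 0$ the triangle splits and $N_l \simeq i_*(X_l) \oplus i_*(X_l)[-1]$ must lie in $\add(W_j)$; Krull-Schmidt then forces $W_j \simeq i_*(X_l)$ or $W_j \simeq i_*(X_l)[-1]$, but applying $j^!$ (and using $j^! i_* = 0$) gives $Y_j \simeq j^!(W_j) = 0$, the contradiction I need. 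Part (3) will follow by the dual argument with right approximations, cocones, and the equivalence $\Hom(W_j, i_*(X_l)[1]) = 0 \Leftrightarrow \Hom(W_j[-1], i_*(X_l)) = 0$.
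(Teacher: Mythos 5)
Your proposal is correct and follows the same route as the paper. For part (1), the paper likewise observes that $i_*(g_{ti})$ is a minimal left $\add i_*(X_i)$-approximation, transported through the full faithfulness of $i_*$ exactly as you describe, and then uses exactness of $i_*$ to identify the cones. For parts (2) and (3), the paper only asserts the chain of equivalences $\cone(g_{t(m+j)})\simeq i_*(X_t)$ iff $g_{t(m+j)}=0$ iff $\Hom(i_*(X_t),W_j[1])=0$ with a bare ``Notice that''; your argument supplies the nontrivial converse, and it is correct: since $\End(i_*(X_t))\simeq K$, the connecting morphism $\delta$ is zero or an isomorphism, and in the split case the Krull--Schmidt decomposition of $W_{jt}\in\add(W_j)$ is ruled out by applying $j^!$ (using $j^!i_*=0$ and $j^!(W_j)\simeq Y_j\neq0$). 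A slightly shorter way to close that converse, for your reference: apply $j^!$ directly to the defining triangle $i_*(X_t)[-1]\to W_{jt}\to\cone(g_{t(m+j)})\to i_*(X_t)$; since $j^!i_*=0$ one gets $j^!(W_{jt})\simeq j^!(\cone(g_{t(m+j)}))$, so if the cone is $i_*(X_t)$ then $j^!(W_{jt})=0$, and because $j^!$ sends $W_j$ to $Y_j\neq0$ this forces $W_{jt}=0$, whence $g_{t(m+j)}=0$ by minimality.
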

\begin{proof}
(1) Let $1\leq t\leq m$.
If $t=i$, since $i_*(X'_t)=i_*(X_t)[1]$, then the $t$-th terms of $S^+_l$ and $\mu^+_i(S_\T)$ are the same. If $t\neq i$, then $i_*(X'_t)=i_*(\cone(g_{ti}))$ {is} the $t$-th term  in $S^+_l$. Consider the triangle
$$i_*(X_t[-1])\stackrel{i_*(g_{ti})}\longrightarrow i_*(X_{ti})\longrightarrow i_*(\cone(g_{ti}))\longrightarrow i_*(X_t).$$
It is obvious that $i_*(g_{ti})$ is a minimal left $\add i_*(X_i)$-approximation. So the $t$-th term in $\mu_i^+(S_\T)$  is $i_*(\cone(g_{ti}))$ which coincides with  the $t$-th term in $S^+_l$.

(2) Let $1\leq t\leq m$. The $t$-th term of $S^+_r$ is $i_*(X_t)$. By definition, the $t$-th term of $\mu^+_{m+j}(S_\T)$ is the cone of a minimal left $\add(W_j)$ approximation, i.e.
$$i_*(X_t)[-1]\stackrel{g_{t(m+j)}}\longrightarrow W_{jt}\longrightarrow\cone(g_{t(m+j)})\longrightarrow i_*(X_t).$$
Notice that $\cone(g_{(m+j)t})\simeq i_*(X_t)$ if and only if $g_{(m+j)t}=0$ if and only if $\Hom(i_*(X_t), W_j[1])=0$.

(3) It is a dual statement of (2).
\end{proof}

\begin{rem}\label{rem:key rem}
  We give some comments on the assumptions in the Lemma~\ref{lem:first m-items}.

   On the one hand, there are the isomorphisms (the second isomorphism in each equation is by Lemma~\ref{lem:image of W}).
$$\Hom(i_*(X_t), W_j[1])\simeq\Hom(X_t, i^!(W_j)[1])\simeq\Hom(X_t, V_j[1])$$
and
$$\Hom(W_j, i_*(X_t)[1])\simeq\Hom(i^*(W_j), X_t[1])\simeq\Hom(U_j, X_t).$$

On the other hand, by applying $\Hom(i_*(X_t), -[1])$ { to} the triangle $(\#\#)$ w.r.t. $W_j$, there is the following exact sequence
  $$\xymatrix{
  (i_*(X_t), j_!(Y_j)[1])\ar[r]&(i_*(X_t), W_j[1])\ar[r]&(i_*(X_t), i_*(U_j)[2])
  }$$
  So there is a sufficient condition to satisfy the condition in Lemma~\ref{lem:first m-items}(2):
  $$\Hom(i_*(X_t), j_!(Y_j)[1])=0=\Hom(X_t, X_l[s]), 1\leq t, l\leq m, s\geq 2.$$
  Dually, there is a sufficient condition to satisfy the condition in Lemma~\ref{lem:first m-items}(3): $$\Hom(j_*(Y_j), i_*(X_t)[1])=0=\Hom(X_t, X_l[s]), 1\leq t, l\leq m, s\geq 2.$$
\end{rem}

\begin{lem}\label{lem:order left-right mutation}
The following hold.
\begin{enumerate}
  \item $\mu^+_i(S_\T)\geq S^+_l$, and dually $S^-_l\geq \mu^-_i(S_\T)$;
  \item $\mu^+_{m+j}(S_\T)\geq S^+_r$, and dually $S^-_r\geq \mu^-_{m+j}(S_\T)$.
\end{enumerate}
\end{lem}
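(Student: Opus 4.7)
The plan is to verify the defining Hom-vanishing $\Hom(\alpha,\beta[s])=0$ (for $s<0$, $\alpha$ in the smaller collection, $\beta$ in the larger) directly, by case analysis based on where $\alpha$ and $\beta$ sit in each simple-minded collection. For part~(1), Lemma~\ref{lem:first m-items}(1) already tells us that the first $m$ entries of $S^+_l$ and $\mu^+_i(S_\T)$ coincide, both equal to $i_*(\mu^+_i(S_\X))$. Write the remaining entries as $W'_1,\dots,W'_n$ in $S^+_l$ (produced by the gluing procedure of Section~\ref{sec:gluing} with respect to the mutated $t$-structure $(\Filt\mu^+_i(S_\X)[\geq 0],\Filt\mu^+_i(S_\X)[\leq 0])$ in $\X$) and as $W''_1,\dots,W''_n$ in $\mu^+_i(S_\T)$ (produced by the mutation triangles $W_k[-1]\to W^*_k\to W''_k\to W_k$ with $W^*_k\in\add(i_*(X_i))$).

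Four cases arise. When both $\alpha$ and $\beta$ lie in $i_*(\mu^+_i(S_\X))$, the vanishing is just the SMC axiom for $\mu^+_i(S_\X)$ transported by $i_*$. When $\alpha=W'_a$ and $\beta=i_*(X'_b)$, the adjunction $(i^*,i_*)$ combined with Lemma~\ref{lem:image of W} rewrites the Hom as $\Hom(U'_a[1],X'_b[s])$ between objects on opposite sides of the mutated $t$-structure in $\X$, so aisle/co-aisle orthogonality kills it. In the two cases with $\beta=W''_b$, applying $\Hom(\alpha,-[s])$ to the mutation triangle gives the three-term exact portion
\begin{equation*}
\Hom(\alpha,W^*_b[s])\to\Hom(\alpha,W''_b[s])\to\Hom(\alpha,W_b[s]).
\end{equation*}
The right-hand term vanishes for $s<0$ by $S_\T\geq S^+_l$ (Lemma~\ref{lem:mutation order}). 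Since $W^*_b\in\add(i_*(X_i))$, the left-hand term reduces to a Hom into $i_*(X_i)[s]$, which is killed either by $S_\X\geq\mu^+_i(S_\X)$ (Proposition~\ref{prop:mutation}) when $\alpha=i_*(X'_a)$, or by the observation $X_i\in\Filt\mu^+_i(S_\X)[\leq -1]$ together with $t$-structure orthogonality when $\alpha=W'_a$.

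The dual inequality $S^-_l\geq\mu^-_i(S_\T)$ in part~(1) comes from running the same argument in $\T^{\op}$, using the dual gluing via $P_k$ in place of $W_k$. Part~(2) is handled by the same four-case pattern, mutating now at the $(m+j)$-th position with approximator $W_j$ and with $S^+_r$ glued from $S_\X$ and $\mu^+_j(S_\Y)$. The genuinely new situation is the element $W_j[1]\in\mu^+_{m+j}(S_\T)$, which is not of the form $\cone$ of an approximation; here one verifies $\Hom(\alpha,W_j[t])=0$ for $t\leq 0$ directly, using Lemma~\ref{lem:prep2} when $\alpha=i_*(X_a)$, and reducing via the defining triangle of $W^r_a$ to $\Hom(Y'_a,Y'_j[t-1])=0$ (the SMC axiom for $\mu^+_j(S_\Y)$) when $\alpha=W^r_a$.

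I expect the main obstacle to be purely organizational: in each case one has to identify which three-term portion of a long exact sequence is relevant, and then kill both flanking terms using one of the already available vanishings---Lemma~\ref{lem:mutation order}, Proposition~\ref{prop:mutation}, Lemma~\ref{lem:prep2}, or aisle/co-aisle orthogonality in a mutated $t$-structure. Every potentially delicate Hom group collapses under one of these two principles, and the matching of the first $m$ terms via Lemma~\ref{lem:first m-items} means no asymmetry has to be tracked across the $i_*$ block.
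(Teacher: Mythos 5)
Your proposal is correct and follows essentially the same strategy as the paper: apply the mutation triangle for $W''_b$, kill both flanking terms using $S_\T \geq S^+_*$ from Lemma~\ref{lem:mutation order} (together with adjunctions and aisle/co-aisle orthogonality), and treat the special element $W_{m+j}[1]$ in part (2) by a separate direct computation through the defining triangle $(\#)$ of $W'_a$, reducing to $\Hom(Y'_a,Y'_j[t-1])=0$. The only notable difference is in part (1), where the paper dispatches three of your four cases at once by observing that the first $m$ terms of $S^+_l$ and $\mu^+_i(S_\T)$ coincide (Lemma~\ref{lem:first m-items}(1)), so any Hom with one argument in the shared $i_*$-block already vanishes by the SMC axiom of $S^+_l$ or of $\mu^+_i(S_\T)$; your explicit adjunction computations for those cases are valid but redundant.
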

\begin{proof}
(1) By Lemma~\ref{lem:first m-items}, we set
\begin{align*}
  \mu^+_i(S_\X) &= \{X'_1, \cdots, X'_m\} \\
 S^+_l  & =\{i_*(X'_1), \cdots, i_*(X'_m), W'_{m+1}, \cdots, W'_{m+n}\} \\
 \mu_i^+(S_\T) & =\{i_*(X'_1), \cdots, i_*(X'_m), W''_{m+1}, \cdots, W''_{m+n}\}
\end{align*}
Let $s<0$.
It suffices to prove ${ \Hom(W'_l, W''_j[s])=0}, m+1\leq l,  j\leq m+n$.
By applying $\Hom(W'_l, -[s])$ { to} the triangle
$$W_j[-1]\longrightarrow i_*(X_{ji})\longrightarrow W''_j\longrightarrow W_j,$$
then there is an exact sequence
$$(W'_l, i_*(X_{ji})[s])\longrightarrow(W'_l, W''_j[s])\longrightarrow(W'_l, W_l[s])$$
Since $S_\T\geq S^+_l$ (Lemma~\ref{lem:mutation order}), then the left and right terms are 0, so is the middle one.

(2) Set
\begin{align*}
  \mu^+_j(S_\Y)= & \{Y'_1, \cdots, Y'_m\}, \\
   S^+_r=&\{i_*(X_1), \cdots, i_*(X_m), W'_{m+1}, \cdots, W'_{m+n}\}  \\
 \mu^+_{m+j}(S_\T)=& \{W''_1, \cdots, W''_m, W''_{m+1}, \cdots, W''_{m+n}\}
\end{align*}
where $W''_{m+j}=W_j[1]$.
Let $s<0$. The statement is equivalent to the following four claims.

{\bf Claim 1:} $\Hom(i_*(X_l), W''_{m+j}[s])=0, 1\leq l\leq m$. {This is due to adjunctions and Lemma~\ref{lem:image of W} and the properties of $t$-structures.}

{\bf Claim 2:} For $t\neq m+j$, $\Hom(i_*(X_i), W''_t[s])=0$.

When $1\leq t\leq m$, {by the rigidity of $Y_j$ and Lemma~\ref{lem: rigidity}, there is a triangle}
$$i_*(X_t)[-1]\stackrel{g_{tj}}\longrightarrow W_{tj}\longrightarrow W''_t\longrightarrow i_*(X_t).$$
By applying $\Hom(i_*(X_i), -[s])$ { to} this triangle, there is an exact sequence
$$(i_*(X_i), W_{tj}[s])\longrightarrow (i_*(X_i), W''_t[s])\longrightarrow(i_*(X_i),  i_*(X_t)[s])$$
{ in which, the left term is 0 due to adjunction and Lemma~\ref{lem:image of W}, and the right term is 0 because $S_{\X}$ is a simple-minded collection.}  Then we have the middle one is 0.

When $m+1\leq t\leq m+n$, due to the rigidity of $Y_j$ and Lemma~\ref{lem: rigidity}, there is a triangle
$$W_t[-1]\stackrel{g_{tj}}\longrightarrow W_{tj}\longrightarrow W''_t\longrightarrow W_t.$$
By applying $\Hom(i_*(X_i), -[s])$ { to} this triangle, there is an exact sequence
$$(i_*(X_i), W_{tj}[s])\longrightarrow (i_*(X_i), W''_t[s])\longrightarrow(i_*(X_i),  W_t[s]),$$
in which, the left and right terms are 0, then so is the middle one.

{\bf Claim 3:} $\Hom(W'_l, W''_{m+j}[s])=0$.

When $s<-1$, then $\Hom(W'_l, W_{m+j}[1][s])=0$ by the fact $S_\T\geq S^+_r$ (Lemma~\ref{lem:mutation order}).

When $s=-1$, by applying $\Hom(-, W_{m+j})$ { to} the triangle
$$i_*(U'_l)\longrightarrow j_!(Y'_l)\longrightarrow W'_l\longrightarrow i_*(U'_l)[1],$$
there is an exact sequence
$$(i_*(U'_l)[1], W_{m+j})\longrightarrow (W'_l, W_{m+j})\longrightarrow (j_!(Y'_l), W_{m+j})$$
in which $\Hom(i_*(U'_l)[1], W_{m+j})\simeq \Hom(U'_l[1], i^!(W_{m+j}))\simeq \Hom(U'_l[1], V_j)=0$ and
$\Hom(j_!(Y'_l), W_{m+j})\simeq \Hom(Y'_l, j^!(W_{m+j}))\simeq \Hom(Y'_l, Y_j)=\Hom(Y'_l, Y'_j[-1])=0.$
Thus, the middle term $\Hom(W'_l, W_{m+j})=0$.

{\bf Claim 4:} For $t\neq m+j$, $\Hom(W'_l, W''_t[s])=0$. By a similar way as the proof of Claim 2 and use the fact $S_\T\geq S^+_r$ (Lemma~\ref{lem:mutation order}).
\end{proof}

The following lemma plays a key role in the proof of the main theorem below. Let
$$g_{t(m+j)}: W_t[-1]\longrightarrow W_{tj}$$
be a minimal left $\add (W_{m+j})$-approximation.

\begin{lem}\label{lem:key lem}
 With the condition in Lemma~\ref{lem:first m-items}(2), i.e. 
$${\Hom(i_*(X_t), W_j[1])=0, 1\leq t\leq m.}$$ 
Then the morphism
    $$j^!(g_{t(m+j)}): j^!(W_t[-1])\longrightarrow j^!(W_{tj})$$
is a minimal left $\add(Y_j)$-approximation. Moreover, there exists the commutative diagram of triangles
$$\xymatrix{
Y_t[-1]\ar[d]_{\simeq}\ar[r]^{g_{tj}}
&Y_{tj}\ar[d]^{\simeq}\ar[r]
&Y'_t\ar[d]^{\simeq}
\\
 j^!(W_t[-1])\ar[r]^{j^!(g_{t(m+j)})}
 &j^!(W_{tj})\ar[r]
 &j^!(W'_t)
}$$
\end{lem}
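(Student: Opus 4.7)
The plan is to first upgrade the partial Hom-isomorphism of Lemma~\ref{lem:prep1} to the critical positive-shift case
$$\Hom_\T(W_t[-1], W_{tj}) \;\xrightarrow{\sim}\; \Hom_\Y(Y_t[-1], Y_j^k),$$
by exploiting the defining triangle $(\#)$ together with the running hypothesis $\Hom_\T(i_*(X_r), W_j[1]) = 0$. Transferring the minimal left $\add(W_{m+j})$-approximation property of $g_{t(m+j)}$ across this isomorphism will yield the first assertion, and uniqueness of minimal left approximations combined with axiom (TR3) will produce the commutative diagram. Here I write $W_{tj} \simeq W_j^k$, using that $\End_\T(W_j) \simeq \End_\Y(Y_j) \simeq K$ by Lemma~\ref{lem:prep1} forces $W_j$ to be indecomposable, so by Krull--Schmidt every object of $\add(W_j)$ is a finite direct sum $W_j^k$.

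To establish the Hom-iso, apply $\Hom_\T(-, W_{tj})$ to the $[-1]$-shift of $(\#)$ and extract the four-term segment
$$\Hom_\T(i_*(U_t), W_{tj}) \to \Hom_\T(W_t[-1], W_{tj}) \to \Hom_\T(j_!(Y_t)[-1], W_{tj}) \to \Hom_\T(i_*(U_t), W_{tj}[1]).$$
Both outer terms vanish by induction along a $\Filt S_\X[\geq 0]$-filtration of $U_t$: each graded piece is $X_r[\ell]$ with $\ell \geq 0$, and the shift identity $\Hom_\T(i_*(X_r[\ell]), W_j^k[s]) = \Hom_\T(i_*(X_r), W_j[s-\ell])^k$ reduces vanishing at $s=0$ entirely to Lemma~\ref{lem:prep2}; at $s=1$ the same lemma kills the contributions with $\ell \geq 1$, while the boundary case $s=1, \ell=0$ is exactly the running hypothesis. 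The middle term $\Hom_\T(j_!(Y_t)[-1], W_{tj}) \simeq \Hom_\Y(Y_t[-1], Y_j^k)$ comes from the $(j_!, j^!)$-adjunction together with Lemma~\ref{lem:image of W}(1), and naturality identifies the resulting iso with the action of $j^!$ under the identifications $j^!(W_t) \simeq Y_t$ and $j^!(W_{tj}) \simeq Y_j^k$.

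With the Hom-iso in hand, any $\alpha: Y_t[-1] \to Y_j^k$ lifts uniquely to $\tilde\alpha: W_t[-1] \to W_{tj}$ with $j^!(\tilde\alpha) = \alpha$; factoring $\tilde\alpha$ through the approximation $g_{t(m+j)}$ and applying $j^!$ exhibits $j^!(g_{t(m+j)})$ as a left $\add(Y_j)$-approximation. For left-minimality, lift any $\phi: Y_j^k \to Y_j^k$ satisfying $\phi \circ j^!(g_{t(m+j)}) = j^!(g_{t(m+j)})$ to $\tilde\phi: W_{tj} \to W_{tj}$ via the ring isomorphism $j^!: \End_\T(W_{tj}) \xrightarrow{\sim} \End_\Y(Y_j^k)$ supplied by Lemma~\ref{lem:prep1}; the Hom-iso forces $\tilde\phi \circ g_{t(m+j)} = g_{t(m+j)}$, and left-minimality of $g_{t(m+j)}$ then makes $\tilde\phi$, hence $\phi$, an isomorphism. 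Finally, both $g_{tj}$ and $j^!(g_{t(m+j)})$ are minimal left $\add(Y_j)$-approximations of $Y_t[-1]$, so uniqueness up to isomorphism supplies a vertical iso $Y_{tj} \xrightarrow{\sim} j^!(W_{tj})$ making the left square commute; axiom (TR3) completes this to a morphism of triangles, and the triangulated five-lemma forces the remaining vertical $Y'_t \to j^!(W'_t)$ to be an isomorphism.

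The main obstacle is the vanishing $\Hom_\T(i_*(U_t), W_j[1]) = 0$: this is precisely the boundary case ($\ell = 0$) of the filtration argument that Lemma~\ref{lem:prep2} by itself cannot control, and it is exactly here that the hypothesis of Lemma~\ref{lem:first m-items}(2) is indispensable.
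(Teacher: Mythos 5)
Your proof is correct, and it takes a route that is genuinely different from the paper's, though the two meet at the same critical use of the hypothesis. The paper establishes the approximation property by applying $\Hom(W_{tj},-)$ and $\Hom(W_t[-1],-)$ to the triangle $(\#\#)$, i.e.\ by comparing $\Hom(-,W_j)$ against $\Hom(-,j_*(Y_j))$ and killing the four corners $\Hom(-,i_*(V_j)[k])$ for $k=0,1$; then for left minimality it runs a separate argument through the unit $\eta_t\colon W_t[-1]\to j_*j^!(W_t[-1])$ and the triangle $i_*i^!(W_{tj})\to W_{tj}\to j_*j^!(W_{tj})$, reducing to the minimality of the composite $\xi=j_*j^!(g_{tj})\eta_t$. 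You instead apply $\Hom(-,W_{tj})$ to the shift of the triangle $(\#)$, obtain a single isomorphism $\Hom_\T(W_t[-1],W_{tj})\simeq\Hom_\Y(Y_t[-1],Y_j^k)$ realised by $j^!$, and then transport both the approximation property and the minimality across it using the ring isomorphism $\End_\T(W_{tj})\simeq\End_\Y(Y_j^k)$ (which, as you note, is supplied by Lemma~\ref{lem:prep1} together with indecomposability of $W_j$ and Krull--Schmidt). Both proofs are forced to invoke the running hypothesis at exactly the same point: you use it to kill $\Hom(i_*(U_t),W_{tj}[1])$ at the $\ell=0$ level of the filtration of $U_t$, while the paper uses the equivalent reformulation $\Hom(U_t,V_j[1])=0$ via Remark~\ref{rem:key rem}. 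Your version is more economical because minimality requires no new triangle and no auxiliary morphism $\xi$; it simply follows from functoriality of $j^!$ and injectivity of the established Hom-isomorphism. The concluding step --- uniqueness of minimal left approximations plus (TR3) --- is the same as in the paper.
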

\begin{proof}
  There are commutative diagram
  $$\xymatrix{
  \Hom(j^!(W_{tj}),Y_j)\ar[d]_{j^!(g_{t(m+j)})^*}\ar[rr]^{\stackrel{\text{{adjunction}}}{\simeq}}
  & &\Hom(W_{tj}, j_*(Y_j))\ar[d]^{g_{t(m+j)}^*}
  \\
   \Hom( j^!(W_t[-1]), Y_j)\ar[rr]^{\stackrel{{{\text{adjunction}}}}{\simeq}}
  & &\Hom(W_t[-1], j_*(Y_j))
  }$$
  and commutative diagram {that is obtained by applying the functors $\Hom_{\T}(W_{tj},-)$ and $\Hom_{\T}(W_{t}[-1],-)$ to the triangle $(\#\#)$}
  {\xiaowuhao
  $$\xymatrix{
  (W_{tj}, i_*(V_j))\ar[r]\ar[d]
  &(W_{tj}, W_j)\ar[r]\ar[d]^{(g_{t(m+j)}, W_j)}
  &(W_{tj}, j_*(Y_j))\ar[r]\ar[d]^{g_{t(m+j)}^*}
  &(W_{tj}, i_*(V_j)[1])\ar[d]
  \\
  (W_t[-1], i_*(V_j))\ar[r]
  &(W_t[-1], W_j)\ar[r]
  &(W_t[-1], j_*(Y_j))\ar[r]
  &(W_t[-1], i_*(V_j)[1])
  .}$$}
 In which
 $$\Hom(W_j, i_*(V_j)[k])\simeq\Hom(i^*(W_j), V_j[k])\simeq\Hom(U_j[1], V_j[k])=0, k=0,1$$
 and { by Lemma~\ref{lem:image of W}} 
$${\Hom(W_t[-1], i_*(V_j))\simeq\Hom(i^*(W_t[-1]), V_j)\simeq \Hom(U_t, V_j)=0.}$$
Note that $\Hom(W_t[-1], i_*(V_j)[1])\simeq\Hom(U_t, V_j[1])$ and thanks to the assumption, we have $\Hom(X_t, V_j[1])=0$ (see Remark~\ref{rem:key rem}), this implies $\Hom(U_t, V_j[1])=0$. Then
  the four corners in the diagram above are 0, and so the middle horizontal two morphisms are {isomorphisms}. Because $g_{t(m+j)}$ is a $\add(W_j)$-approximation, then $\Hom(g_{t(m+j)}, W_j)$ is surjective, and so is $g_{t(m+j)}^*$. Therefore $j^!(g_{t(m+j)})^*$ in the first diagram is surjective, since $j^!(W_{tj})\in\add(j^!(W_{m+j}))=\add(Y_j)$, so $j^!(g_{t(m+j)})$ is a left $\add(Y_j)$-approximation.

  Next, we show this approximation is left minimal. Note that we have the commutative diagram {that is obtained from the unit of the adjunction}
  $$\xymatrix{
  W_t[-1]\ar[r]^{\eta_t}\ar[d]_{g_{tj}}
  &j_*j^!(W_t[-1])\ar[d]^{j_*j^!(g_{tj})}
  \\
  W_{tj}\ar[r]^{\eta_{tj}}
  &j_*j^!(W_{tj})
  }$$
To show $j^!(g_{tj})$ is left minimal, {it is equivalent to }show $j_*j^!(g_{tj})$ is left minimal. And moreover, it suffices to show $\xi=j_*j^!(g_{tj})\eta_t$ is left minimal.

Let $\alpha: j_*j^!(W_{tj})\to j_*j^!(W_{tj})$ satisfy $\alpha \xi= \xi$. {Note that we have the diagram below,} since
$$\Hom(W_{tj}, i_*i^!(W_{tj})[1])\simeq\Hom(i^*W_{tj}, i^!(W_{tj})[1])\simeq\Hom(U_{tj}, V_{tj})=0,$$ 
there exits $\beta$ such that the middle square is commutative,
$$\xymatrix{
&&W_t[-1]\ar[d]^\xi\ar[dl]_{g_{tj}}&
\\
&W_{tj}\ar[r]^{\eta_{tj}}\ar@{-->}[d]^{\beta}&j_*j^!(W_{tj})\ar[d]^{\alpha}&
\\
i_*i^!(W_{tj})\ar[r]&W_{tj}\ar[r]^{\eta_{tj}}&j_*j^!(W_{tj})\ar[r]&i_*i^!(W_{tj})[1]
}$$
Since $\Hom(W_t[-1], i_*i^!(W_{tj}))\simeq\Hom(U_t, V_j[1])=0$, then the map
$$\Hom(W_t[-1], W_{tj})\stackrel{\eta_{tj}^*}{\longrightarrow}\Hom(W_t[-1], j_*j^!(W_{tj}))$$
is injective. Because $\eta_{tj}^*(\beta g_{tj})=\eta_{tj}\beta g_{tj}=\alpha \xi=\xi=\eta_{tj}^*(g_{tj})$, then $\beta g_{tj}=g_{tj}$. Since $g_{tj}$ is left minimal, then $\beta$ is an isomorphism, and so is $\alpha$. In fact, in this case, $\alpha= J_*j^!(\beta)$ Thus $\xi$ is left minimal.

Because $j^!(W_t[-1])\simeq Y_t[-1]$ and $g_{tj}: Y_t[-1]\to Y_{tj}$ is left minimal, then there exists commutative diagram
$$\xymatrix{
Y_t[-1]\ar[d]_{\simeq}\ar[r]^{g_{tj}}
&Y_{tj}\ar[d]^{\simeq}
\\
 j^!(W_t[-1])\ar[r]^{j^!(g_{t(m+j)})}
 &j^!(W_{tj})
}$$
which can {be extended} to an isomorphism of triangles.
\end{proof}

\begin{thm}\label{thm:mutation smc}
With the notation and assumption above.
\begin{enumerate}
  \item $S^+_l=\mu_i^+(S_\T)$, and dually $S^-_l=\mu_i^-(S_\T)$, where $i\in \{1,\cdots, m\}$;
  \item For $j\in \{m+1,\cdots, m+n\}$, if $\Hom(i_*(X_t), W_{j}[1])=0, 1\leq t\leq m$,  then $S^+_r=\mu^+_{m+j}(S_\T)$;
  \item For $j\in \{m+1,\cdots, m+n\}$, if $\Hom(W_{j}, i_*(X_t)[1])=0, 1\leq t\leq m$,  then $S^-_r=\mu^-_{m+j}(S_\T)$.
\end{enumerate}
\end{thm}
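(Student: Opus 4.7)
Parts (1), (2), (3) are structurally similar: the task is to identify two simple-minded collections of $\T$ term by term. The first $m$ terms match by Lemma~\ref{lem:first m-items} (unconditionally for (1), under the stated hypotheses for (2) and (3)). The one-sided partial order from Lemma~\ref{lem:order left-right mutation} orients the argument, and Lemma~\ref{lem:key lem} supplies the main technical input for parts (2) and, dually, (3).

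For part (2), write $S^+_r=\{i_*(X_1),\dots,i_*(X_m),W'_{m+1},\dots,W'_{m+n}\}$ and $\mu^+_{m+j}(S_\T)=\{i_*(X_1),\dots,i_*(X_m),W''_{m+1},\dots,W''_{m+n}\}$, with the first $m$ terms agreeing by Lemma~\ref{lem:first m-items}(2). I would split the remaining comparison into two cases. For $l=j$, since $j_!$ and $i^!$ commute with shifts, the decomposition of $i^!j_!(Y'_j)=i^!j_!(Y_j)[1]$ is simply the shift of the original, so $U'_j=U_j[1]$ and $V'_j=V_j[1]$; the gluing triangle for $W'_{m+j}$ is therefore the shift of the one for $W_{m+j}$, giving $W'_{m+j}\simeq W_{m+j}[1]=W''_{m+j}$ by the definition of left mutation. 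For $l\neq j$, I would apply $j^!$ to the mutation triangle $W_{m+l}[-1]\to W_{(m+l)(m+j)}\to W''_{m+l}\to W_{m+l}$ in $\T$; Lemma~\ref{lem:key lem} identifies it with the mutation triangle $Y_l[-1]\to Y_{lj}\to Y'_l\to Y_l$ in $\Y$, hence $j^!(W''_{m+l})\simeq Y'_l$. Since $j^!(W'_{m+l})\simeq Y'_l$ by Lemma~\ref{lem:image of W}(1) applied to the gluing $S^+_r$, the $j^!$-images match. Combining this with analogous computations for $i^*$ and $i^!$ (using that $W_{(m+l)(m+j)}\in\add(W_{m+j})$, so its $i^*$- and $i^!$-images are direct sums of copies of $U_j[1]$ and $V_j$ respectively) and the one-sided order $\mu^+_{m+j}(S_\T)\geq S^+_r$ from Lemma~\ref{lem:order left-right mutation}(2), I would conclude $W'_{m+l}\simeq W''_{m+l}$.

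Part (3) is formally dual to part (2): work in the opposite recollement, invoking the dual of Lemma~\ref{lem:key lem}. Part (1) I would carry out directly. Via adjunction and the triangle $(\#)$ from Section~\ref{sec:gluing}, one computes $\Hom(W_{m+l}[-1],i_*(X_i))\simeq\Hom(U_l,X_i)$, so the minimal left $\add(i_*(X_i))$-approximation of $W_{m+l}[-1]$ factors as $W_{m+l}[-1]\to i_*(U_l)\xrightarrow{i_*(h_l)} i_*(X_i)^{d_l}$, where $h_l$ is the minimal left $\add(X_i)$-approximation of $U_l$ in $\X$. An octahedral-axiom chase then matches the cone $W''_{m+l}$ with the $W'_{m+l}$ built from the decomposition of $i^!j_!(Y_l)$ with respect to the mutated $t$-structure $(\Filt\mu^+_i(S_\X)[\geq 0],\Filt\mu^+_i(S_\X)[\leq 0])$ on $\X$.

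The principal obstacle is the bookkeeping in part (2) for $l\neq j$: Lemma~\ref{lem:key lem} secures the $j^!$-side identification, but extending it to an honest isomorphism in $\T$ itself requires patching the various adjoint-image triangles together compatibly. Should this direct route grow cumbersome, a safer fallback is to show that $S^+_r$ and $\mu^+_{m+j}(S_\T)$ generate the same algebraic $t$-structure on $\T$ and then invoke the bijection between isomorphism classes of simple-minded collections and algebraic $t$-structures recalled in Subsection~\ref{subsec:smc}.
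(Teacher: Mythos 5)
Your route is genuinely different from the paper's, and it contains a gap you yourself flag. The paper proves all three parts by the same \emph{order sandwich}: Lemma~\ref{lem:order left-right mutation} already gives $\mu^+_i(S_\T)\geq S^+_l$ (resp.\ $\mu^+_{m+j}(S_\T)\geq S^+_r$), so one only needs the reverse inequality, which reduces to a collection of $\Hom$-vanishings checked directly from the triangles $(\#)$, $(\#\#)$, $(\heartsuit)$ and Lemma~\ref{lem:key lem}; equality then follows from antisymmetry of $\geq$ (\cite[Proposition 7.9]{KY2014}). Your fallback --- ``show both generate the same algebraic $t$-structure'' --- is in substance the same move, since the partial order is precisely inclusion of aisles; had you carried it out you would have reproduced the paper's proof.

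The gap in your primary route is the step where you conclude $W'_{m+l}\simeq W''_{m+l}$ from matching the images under $j^!$, $i^*$, $i^!$. Two objects of $\T$ can have isomorphic images under all six functors of a recollement without being isomorphic: what determines an object is the gluing datum, i.e.\ the comparison morphisms in the canonical triangles, not the outer images alone. To close the gap you would need to produce an actual morphism $W'_{m+l}\to W''_{m+l}$ in $\T$ inducing isomorphisms after $j^!$ and $i^*$ (then a standard recollement detection argument finishes), and Lemma~\ref{lem:key lem} only gives you a diagram after applying $j^!$, not a morphism upstairs. Invoking the one-sided order $\mu^+_{m+j}(S_\T)\geq S^+_r$ ``in combination'' with the image matching does not supply such a morphism either; those are two independent pieces of information. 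The same criticism applies to your sketch of part (1): the ``octahedral-axiom chase'' would have to be made explicit, and it is nontrivial to verify that the truncation of $i^!j_!(Y_l)$ with respect to the mutated $t$-structure $(\Filt\mu^+_i(S_\X)[\geq 0],\Filt\mu^+_i(S_\X)[\leq 0])$ is related to the old truncation by exactly the cone your approximation produces.

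One genuinely nice observation you make that the paper does not exploit: for $l=j$ in part (2), the shifted decomposition triangle $U_j[1]\to i^!j_!(Y_j)[1]\to V_j[1]\to U_j[2]$ is itself a valid truncation of $i^!j_!(Y'_j)$ with respect to $(\Filt S_\X[\geq 0],\Filt S_\X[\leq 0])$, so $W'_{m+j}\simeq W_{m+j}[1]=W''_{m+j}$ for free, without any $\Hom$-vanishing. In the paper this is handled wholesale by the order argument, so your shortcut is a small but real simplification for that one index.
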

\begin{proof}
We only prove (1) and (2). (3) is a dual statement of (2) {and can be proved by using the dual statement of Lemma~\ref{lem:key lem}.}

(1)  Thanks to Lemma~\ref{lem:first m-items}, set
\begin{align*}
  \mu^+_i(S_\X) &= \{X'_1, \cdots, X'_m\} \\
 S^+_l  & =\{i_*(X'_1), \cdots, i_*(X'_m), W'_{m+1}, \cdots, W'_{m+n}\} \\
 \mu_i^+(S_\T) & =\{i_*(X'_1), \cdots, i_*(X'_m), W''_{m+1}, \cdots, W''_{m+n}\}
\end{align*}

Because we have Lemma~\ref{lem:order left-right mutation}, so it suffices to
 show that $S^+_l\geq \mu_i^+(S_\T)$. It {is equivalent to} prove that for $s<0$,
 $$\Hom(W''_j, W'_l[s])=0, m+1\leq j, l\leq m+n.$$

 By applying $\Hom(-, W'_l[s])$ { to} the triangle
  $$W_j[-1]\longrightarrow i_*(X_{ji})\longrightarrow W''_j\longrightarrow W_j,$$
  then there is an exact sequence
$$(i_*(X_{ji})[1], W'_l[s])\longrightarrow(W_j,  W'_l[s])\longrightarrow(W''_j,  W'_l[s]))\longrightarrow (i_*(X_{ji}), W'_l[s])$$
Since $X_{ji}\in\add(X_i)$ and $\Hom(i_*(X_i)[k], W'_l[s])\simeq \Hom(i_*(X'_i)[k-1], W'_l[s])=0, k=0,1$, { the last equality holds because $S^+_l$ is a simple-minded collection,} then
$$\Hom(W_j,  W'_l[s])\simeq \Hom(W''_j,  W'_l[s]).$$
By applying $\Hom(W_j, -[s])$ { to} the triangle $(\#\#)$ with respect to $W'_l$ (Section~\ref{sec:gluing}), then there is an exact sequence
$$\Hom(W_j, i_*(V'_l)[s])\longrightarrow \Hom(W_j,  W'_l[s])\longrightarrow \Hom(W_j, j_*(Y_l)[s]).$$
Since $U_j[1]\in\Filt S_\X[\geq 1]\subset\Filt S_\X'[\geq 0]$ by Corollary~\ref{cor:compare torsion} and $V'_l[s]\in\Filt S_\X'[\leq -1]$, then
$$\Hom(W_j,  i_*(V'_l)[s])\simeq \Hom(i^*(W_j), V'_l[s])\simeq\Hom(U_j[1], V'_l[s])=0.$$
Furthermore, we have
$$\Hom(W_j, j_*(Y_l)[s])\simeq \Hom(j^!(W_j), Y_l[s])\simeq \Hom(Y_j, Y_l[s])=0.$$
Thus $\Hom(W_j,  W'_l[s])=0$, and this implies that $\Hom(W''_j, W'_l[s])=0$. The dual statement is proved similarly.

(2) {The strategy of the proof is the same as that for part (1) by using the partial order on simple-minded collections.} Under our assumption and by Lemma~\ref{lem:first m-items}, set
\begin{align*}
  \mu^+_j(S_\Y)= & \{Y'_1, \cdots, Y'_m\}, \\
   S^+_r=&\{i_*(X_1), \cdots, i_*(X_m), W'_{m+1}, \cdots, W'_{m+n}\}  \\
 \mu^+_{j}(S_\T)=& \{i_*(X_1), \cdots, i_*(X_m), W''_{m+1}, \cdots, W''_{m+n}\}
\end{align*}
where $W''_{m+j}=W_{m+j}[1]$.  { By Lemma~\ref{lem:order left-right mutation}(2), we need to prove $S_r^+\geq \mu^+_{m+j}(S_\T)$.} It is equivalent to prove that for $s<0$, the following equations hold.
\begin{itemize}
  \item [(i)] $\Hom(W_{m+j}[1], W'_l[s])=0, m+1\leq l\leq m+n$;
  \item [(ii)] $\Hom(W''_t, W'_l[s])=0, m+1\leq t(\neq m+j), l(\neq m+j)\leq m+n$;
  \item [(iii)] $\Hom(W''_t, W'_{m+j}[s])=0, m+1\leq t(\neq m+j)\leq m+n.$
\end{itemize}

(i)
By applying $\Hom(W_{m+j}[1], -[s])$ { to} the triangle
$$i_*(V'_l)\longrightarrow W'_l\longrightarrow j_*(Y'_l)\longrightarrow i_*(V'_l)[1],$$
there is an exact sequence
$$(W_{m+j}, i_*(V'_l)[s-1])\longrightarrow (W_{m+j}, W'_l[s-1])\longrightarrow(W_{m+j}, j_*(Y'_l)[s-1])$$
in which
$$(W_{m+j}, i_*(V'_l)[s-1])\simeq (i^*(W_{m+j}), V'_l[s-1])\simeq (U_i, V'_l[s-2])=0,$$
{ the last equality holds because $U_i\in \Filt S_\X[\geq 0], V'_i\in \Filt S_\X[\leq 0]$}. And
 $$(W_{m+j}, j_*(Y'_l)[s-1])\simeq (j^!(W_{m+j}), Y'_l[s-1])\simeq(Y_j, Y'_l[s-1])=0,$$
{ the last equality holds since $\mu^+_j(S_\Y)\geq S_\Y[1]$ (see Proposition~\ref{prop:mutation})}. Therefore we have $\Hom(W_{m+j}[1], W'_l[s])=0$.

(ii)
By applying $\Hom(W''_t, -[s])$ { to the shift of triangle $(\#\#)$}
$$j_*(Y'_l)[-1]\longrightarrow i_*(V'_l)\longrightarrow W'_l\longrightarrow j_*(Y'_l),$$
there is an exact sequence
$$(\spadesuit)\quad (W''_t, i_*(V'_l)[s])\longrightarrow(W''_t, W'_l[s])\longrightarrow(W''_t, j_*(Y'_l)[s]).$$
By applying $\Hom(-, i_*(V'_l)[s])$ { to} the triangle in { $(\heartsuit)$}
$$W_t[-1]\longrightarrow W_{tj}\longrightarrow W''_t\longrightarrow W_t.$$
there is an exact sequence
$$(W_t, i_*(V'_l)[s])\longrightarrow(W''_t, i_*(V'_l)[s])\longrightarrow(W_{tj}, i_*(V'_l)[s])$$
in which
$$\Hom(W_t, i_*(V'_l)[s])\simeq \Hom(i^*(W_{tj}), V'_l[s])\simeq \Hom(U_{tj}[1], V'_l[s])=0$$
and
$$\Hom(W_{tj}, i_*(V'_l)[s])\simeq \Hom(i^*(W_t), V'_l[s])\simeq \Hom(U_t[1], V'_l[s])=0.$$
Thus the middle term $\Hom(W''_t, i_*(V'_l)[s])=0$. Moreover, when $l\neq j$, by applying $\Hom(W''_t, j_*(-)[s])$ { to} the triangle
$$Y_l[-1]\longrightarrow Y_{lj}\longrightarrow Y'_l\longrightarrow Y_l,$$
then there is a diagram with exact rows and columns
$$\xymatrix{
(W_t, j_*(Y_{lj})[s])\ar[d]& &(W_t, j_*(Y_l)[s])\ar[d]
\\
(W''_t, j_*(Y_{lj})[s])\ar[r]\ar[d]&(W''_t, j_*(Y'_l)[s])\ar[r]&(W''_t, j_*(Y_l)[s])\ar[d]
\\
(W_{tj}, j_*(Y_{lj})[s])& &(W_{tj}, j_*(Y_l)[s])
}$$
{ Using adjunctions and Lemma~\ref{lem:image of W}, the four corners are 0,} so is the central one. Thus we have shown the left and right terms in the sequence $(\spadesuit)$ are 0, then so is the middle one.

(iii)
By applying $\Hom(-,  W'_{m+j}[s])$ { to} the triangle
$$W_t[-1]\stackrel{g_{tj}}\longrightarrow W_{tj}\longrightarrow W''_t\longrightarrow W_t,$$
there is an exact sequence
$$(W_t, W'_{m+j}[s])\to (W''_t, W'_{m+j}[s])\to (W_{tj}, W'_{m+j}[s])\stackrel{g_{tj}^*}\to (W_t[-1], W'_{m+j}[s]).$$

To compute $\Hom(W_t, W'_{m+j}[s])$, act $\Hom(W_t, -[s])$ { to} the triangle $(\#\#)$ w.r.t. $W'_{m+j}$ (Section~\ref{sec:gluing}), then there is an exact sequence
$$(W_t, i_*(V'_j)[s])\longrightarrow  (W_t, W'_{m+j}[s])\longrightarrow  (W_t, j_*(Y'_j)[s]) $$
in which
$$(W_t, i_*(V'_j)[s])\simeq (i^*(W_t), V'_j[s])\simeq (U_t[1], V'_j[s])=0$$
and
$$(W_t, j_*(Y'_j)[s])\simeq(j^!(W_t), Y'_j[s])\simeq (Y_t, Y_j[1+s])=0,$$
{ the last equality holds because $t\neq j$ and $S_{\Y}$ is a simple-minded collection.} Thus we have $\Hom(W_t, W'_{m+j}[s])=0$.

Next, we prove that $g_{tj}^*$ is injective. Note that the triangle ($\#\#$)
$$ i_*(V'_j)[s]\longrightarrow W'_{m+j}[s]\longrightarrow j_*(Y'_j)[s]\longrightarrow i_*(V'_j)[s+1]$$
induces the commutative diagram with exact rows
$${\xiaoliuhao
\xymatrix@C=12pt{
(W_{tj}, i_*(V'_j)[s])\ar[r]\ar[d]&(W_{tj}, W'_{m+j}[s])\ar[r]\ar[d]^{g_{tj}^*}&(W_{tj},  j_*(Y'_j)[s])\ar[r]\ar[d]^{(g_{tj}, j_*(Y'_j)[s])}&(W_{tj},  i_*(V'_j)[s+1])\ar[d]
\\
(W_t[-1], i_*(V'_j)[s])\ar[r]&(W_t[-1], W'_{m+j}[s])\ar[r]&(W_t[-1],  j_*(Y'_j)[s])\ar[r]&(W_t[-1],  i_*(V'_j)[s+1])
}}$$
Since the four corners are 0 (note that $i^*(W_t)\simeq U_t[1]$), it suffices to prove the third vertical map $\Hom(g_{tj}, j_*(Y'_j)[s])$ is injective. This is true since we have the commutative diagram
$$\xymatrix{
&(j^!(W'_t), Y'_j[s])\ar[d]
\\
(W_{tj},  j_*(Y'_j)[s])\ar[d]^{(g_{tj}, j_*(Y'_j)[s])}\ar[r]^{\stackrel{{\text{adjunction}}}{\simeq}}&(j^!(W_{tj}), Y'_j[s])\ar[d]^{(j^!(g_{tj}), Y'_j[s])}
\\
(W_t[-1],  j_*(Y'_j)[s])\ar[r]^{\stackrel{{\text{adjunction}}}{\simeq}}&(j^!(W_t)[-1], Y'_j[s]).
}$$
Thank to Lemma~\ref{lem:key lem}, we have $j^!(W'_t)\simeq Y'_t$, and then there is $\Hom(j^!(W'_t), Y'_j[s])\simeq\Hom(Y'_t, Y'_j[s])=0$ since $t\not = j$ and $Y'_t, Y'_j$ in the simple-minded collection $\mu^+_j(S_\Y)$, hence $\Hom(g_{tj}, j_*(Y'_j)[s])$ is injective, and so is $g_{tj}^*$. Therefore $ \Hom(W''_t, W'_{m+j}[s])=0$. This finishes the proof.
\end{proof}

\begin{exam}
With the notation in Example~\ref{exam:not glued}, we give the following examples to explain the main theorem.

(1) The following examples are of simple-minded collections whose mutations {\color{black}on the left-hand side of the recollement} commute with gluing.
$$\xymatrix{
\{S_2\}\ar[r]\ar[d]^{\mu^+}
&\{S_2, S_1\}\ar[d]^{\mu^+_1}
&\{S_1\}\ar[l]\ar@{=}[d]
\\
\{S_2[1]\}\ar[r]
&\{S_2[1], P_1\}
&\{S_1\}\ar[l]
}$$
and
$$\xymatrix{
\{S_2[1]\}\ar[r]\ar[d]^{\mu^-}
&\{S_2[1], P_1[-1]\}\ar[d]^{\mu^-_1}
&\{S_1[-1]\}\ar[l]\ar@{=}[d]
\\
\{S_2\}\ar[r]
&\{S_2, P_1[-1]\}
&\{S_1[-1]\}.\ar[l]
}$$

(2) The following examples are of simple-minded collections whose mutations {\color{black}on the right-hand side of the recollement} commute with gluing. One can check that the simple-minded collections involved satisfying the conditions in Theorem~\ref{thm:mutation smc}(2), (3).
$$\xymatrix{
\{S_2[1]\}\ar[r]\ar@{=}[d]
&\{S_2[1], S_1[1]\}\ar[d]^{\mu^+_2}
&\{S_1[1]\}\ar[l]\ar[d]^{\mu^+}
\\
\{S_2[1]\}\ar[r]
&\{S_2[1],S_1[2]\}
&\{S_1[2]\}\ar[l]
}$$
and
$$\xymatrix{
\{S_2[1]\}\ar[r]\ar@{=}[d]
&\{S_2[1], P_1\}\ar[d]^{\mu^-_2}
&\{S_1\}\ar[l]\ar[d]^{\mu^-}
\\
\{S_2[1]\}\ar[r]
&\{S_2[1], P_1[-1]\}
&\{S_1[-1]\}.\ar[l]
}$$

(3) The following examples are of simple-minded collections whose mutations {\color{black}on the right-hand side of the recollement} don’t commute with gluing. One can check that the simple-minded collections involved don't satisfy the conditions in Theorem~\ref{thm:mutation smc}(2), (3).
$$\xymatrix{
\{S_2[1]\}\ar[r]\ar@{=}[dd]
&\{S_2[1], P_1\}\ar[d]^{\mu^+_2}
&\{S_1\}\ar[l]\ar[dd]^{\mu^+}
\\
&\{S_1, P_1[1]\}\ar[d]^{\neq}
&
\\
\{S_2[1]\}\ar[r]\ar@{=}[dd]
&\{S_2[1], S_1[1]\}\ar[d]^{\mu^-_2}
&\{S_1[1]\}\ar[l]\ar[dd]^{\mu^-}
\\
&\{P_1[1], S_1\}\ar[d]^{\neq}
&
\\
\{S_2[1]\}\ar[r]
&\{S_2[1], P_1\}
&\{S_1\}.\ar[l]
}$$
\end{exam}


\end{document}